\newtheorem{theorem}{Theorem}
\newtheorem{lemma}{Lemma}
\newtheorem{definition}{Definition}
\newtheorem{corollary}{Corollary}
\begin{document}
\def\R{{\mathbb R}}
\def\Z{{\mathbb Z}}
\def\N{{\mathbb N}}
\def\S{{\mathbb{S}^{d-1}}}
\def\wh{\widehat}
\def\C{{\mathbb C}}
\def\M{{\mathbb M}}
\def\l({\left(}
\def\r){\right)}
\def\lV{\left\Vert}
\def\lv{\left\vert}
\def\rV{\right\Vert}
\def\rv{\right\vert}

\title{Strictly positive definite kernels on compact Riemannian manifolds} 


\author{Jean Carlo Guella,\\
Unicamp, Institute of Mathematics, Statistics and Scientific Computing, Campinas, Brazil
\and Janin J\"ager \thanks{ Corresponding author. Email:
janin.jaeger@ku.de}\\ 
KU Eichstätt-Ingolstadt, Mathematisches Institute for machine learning and data science, Ingolstadt, Germany   }

\maketitle

\begin{abstract}{The paper studies strictly positive definite kernels on compact Riemannian manifolds. We state new conditions to ensure strict positive definiteness for general kernels and kernels with certain convolutional structure. We also state conditions for such kernels on product manifolds. As an example conditions for products of two-point homogeneous spaces are presented. \\}
{\textit{Keywords:} strictly positive definite kernels, covariance functions, two-point homogeneous manifolds,\\ \textit{2010 MSC:} 33B10; 33C20; 42A16; 46E22; 65D05  }
\end{abstract}

\section{Introduction}
During the last five years, there has been a tremendous number of
publications stating new results on positive definite kernels on
spheres, see for example  \citep{ Gneiting2013, Hubbert2015} and reference therein  and a smaller number studying other manifolds \citep{Barbosa2016, Barbosa2017, Guella2017, Guella2016a,Guella2016b,Guella2022} including two-point homogeneous manifolds, tori and products of these. Most of the results focus on isotropic
positive definite kernels, which are kernels that only depend on the
geodesic distance of their arguments.  Isotropic kernels  are used in
approximation theory, where they are often referred to as spherical or 
radial basis functions \citep{Hubbert2015}  and are for example applied in
geostatistics \citep{Fornberg2015}. They
are also of importance in statistics where they occur as correlation
functions of Gaussian random fields  \citep{Berg2017a}. Some recent exceptions from the assumption of isotrpie are the axially-symmetric kernels studied on the sphere in \citep{Emery2019,Porcu2019}.

There are few results on kernel methods on general Riemannian manifolds, among them the discussion in \citep{Dyn1997, Narcowich1995, Dyn1999} forms the foundation of the presented results. The papers study generalised hermite interpolation instead of classical interpolation and, of course, do not include the results on strict positive definiteness derived for specific manifolds later.

This publication will characterise general (strictly) positive definite kernels on compact, connected Riemannian manifolds. We give necessary and sufficient conditions for kernels which are only required to be continuous and Hermitian.  The discussion on this abstract level aims to unify the approaches used for different manifolds and to provide a starting point for the investigation of new special cases. The  results will allow us to reproduce results achieved  earlier for isotropic and convolutional kernels as well as add new results.

We will briefly summarize necessary definitions in the first section
and then state the general results for characterisation of positive
definite and strictly positive definite kernels in Section 2. These results extend similar results derived for the special case of the sphere in \citep{Jaeger2021} and \citep{Buhmann2022}. Also in the second section, we describe the implications of assuming convolutional structure on the kernel and discuss conditions for (strict) positive definiteness for such kernels. As special case we derive a condition for kernels given on product manifolds. In the third section the results are discussed for the special case of homogeneous manifolds and specific results for the examples of products of two-point homogeneous spaces, which extend the results known for isotropic kernels, are given. 

\subsection{Definitions and notation}

We study approximation on a $d$-dimensional compact Riemannian manifold $\M$.
The kernels we aim to study can be used to solve interpolation problems on the manifolds. To be precise, we assume a finite set of distinct data sites $\Xi\subset \M$ and  values $f(\xi) \in \C$, $\xi \in \Xi$, of a possibly elsewhere unknown function $f$ on the manifold are given.

The approximant is taking the form 
\begin{equation}\label{eq:Interpolant}  s_f(\zeta)=\sum_{\xi\in \Xi} c_{\xi} K(\zeta, \xi), \qquad c_{\xi}\in \C,\  \zeta \in \M,
\end{equation}where \mbox{$K:\M \times \M \rightarrow \C$} is the kernel.
The  problem of finding such an approximant $s_f$ satisfying
 \begin{equation}
 s_f(\xi)=f(\xi), \qquad \forall \xi \in \Xi,
 \end{equation}
 is uniquely solvable under certain conditions on $K$. 
 \begin{definition}
Let $H$ be a $\C$-Hilbert space with inner product $\langle \cdot , \cdot \rangle$. Then a linear and continuous operator $P:H\rightarrow H$ is called positive if it is self adjoint and 
$$\langle P(x),x \rangle \geq0 ,\qquad  \forall x \in H.$$
A linear and continuous operator $P:H\rightarrow H$ is called strictly positive if it is self adjoint, bounded  and 
$$\langle P(x),x\rangle >0 ,\qquad  \forall x \in H\setminus \lbrace 0\rbrace .$$
 \end{definition}
 We assume all the kernels to be Hermitian, meaning they satisfy $K(\xi,\zeta)=\overline{K(\zeta, \xi)}$ so that the positive definiteness of the kernel will ensure the solvability of the interpolation problem for arbitrary data sets.
 
\begin{definition}\label{DF:SPD}
A Hermitian kernel $K: \Omega \times \Omega \rightarrow \C$ is {\/\rm
positive definite on} $\Omega$ if and only if the matrix  $K_{\Xi}=\left\lbrace
K(\xi,\zeta))\right\rbrace_{\xi,\zeta \in \Xi}$ induces a positive operator on $\C^{\vert \Xi\vert}$ via left  multiplication for arbitrary finite sets of distinct points $\Xi\subset \Omega$, with $\langle\cdot,\cdot\rangle$ being the standard complex scalar product.

The kernel is {\/\rm strictly positive definite} if $K_{\Xi}$ induces a strictly positive operator on $\C^{\vert \Xi\vert}$ via left  multiplication  for  arbitrary finite sets of distinct points $\Xi$.
\end{definition} 
We decided to use the given definition of positive definiteness, deviating from the standard matrix based definition for two reasons. The first in our specific setting is that we can easily switch between finite and infinite-dimensional Hilbert spaces using the operator notation and the second is that we do not have the usual notational inconsistency of defining positive definite kernels using semi-definite matrices. 

We briefly summarise the necessary results from analysis on compact Riemannian manifolds. 

The Laplace operator $\triangle$ is given in local coordinates by
$$\triangle f=\l( \operatorname{det}(g_{i,j})\r)^{-1/2}\frac{\partial}{\partial x_i}\l( \l( \operatorname{det}(g_{i,j})\r)^{1/2}g^{i,j}\frac{\partial f}{\partial x_j}\r),$$
where $g_{i,j}$ denotes the metric tensor and $g^{i,j}$ the components of its inverse.

The first list of results is cited from \citep{DeVito2019} and \citep{Donnelly2006} where reference to the original literature is given.
\begin{theorem}[Sturm-Liouville decomposition] Assume that $\M$ is a compact, connected, $d$-dimensional manifold. There exists an orthonormal base $\lbrace f_{\ell}\rbrace_{\ell \in \N}$ of $L^2(\M)$ such that each function $f_{\ell}$ is smooth and 
$$\triangle f_{\ell}=\lambda_{\ell}f_{\ell}, \quad \ell\in\N,$$
where 
$$0=\lambda_1<\lambda_2\leq \lambda_3\leq \ldots\leq \lambda_{\ell} \leq \ldots,\quad \underset{\ell\rightarrow\infty}{\lim}\lambda_{\ell}=+\infty, $$
and the multiplicity of each eigenvalue is finite. Furthermore, there exist two universal constants $C>0$ and $\ell^*$ s.\,t. for all $\ell\geq \ell^*$
\begin{equation}\label{eq:estabdeigenf} \vert f_{\ell}(\xi)\vert \leq C \lambda_{\ell}^{d/4},\quad \xi \in \M.\end{equation}
Further denote by $N_{\lambda_{\ell}}$ the multiplicity of $\lambda_{\ell}$ in the list of eigenvalues, then 
\begin{equation}\label{eq:estabdeigenvalue} N_{\lambda_{\ell}} \leq C_2 \lambda_{\ell}^{d/2}.\end{equation}
Also the vector space $\operatorname{span}\lbrace f_{\ell}\vert \ell\in\N \rbrace$ is dense in $C^{\ell}(\M)$ for all $\ell\in \N$.
\end{theorem}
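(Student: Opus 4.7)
The plan is to obtain the spectral decomposition from compactness of the resolvent, extract the quantitative bounds from a local Weyl estimate, and finally deduce density of the eigenspan in $C^{\ell}(\M)$ by combining rapid decay of Fourier coefficients of smooth functions with Sobolev embedding.

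First I would realize $\Delta$ as a self-adjoint operator on $L^2(\M)$ via the Friedrichs extension of its action on $C^{\infty}(\M)$, which on the compact manifold $\M$ requires no boundary analysis. Compactness of the resolvent $(I-\Delta)^{-1}\colon L^2(\M)\to L^2(\M)$ follows because elliptic regularity gives a bounded map into $H^2(\M)$, and Rellich--Kondrachov makes $H^2(\M)\hookrightarrow L^2(\M)$ compact. The spectral theorem for compact self-adjoint operators then produces the orthonormal basis $\{f_\ell\}$ of eigenfunctions with eigenvalues of finite multiplicity tending to infinity; that $\lambda_1=0$ is simple follows from the maximum principle, since the only harmonic functions on a connected compact manifold are the constants. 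Smoothness of each $f_\ell$ comes from an elliptic bootstrap: $\Delta f_\ell=\lambda_\ell f_\ell\in L^2$ places $f_\ell\in H^{2k}(\M)$ for every $k$, hence in $C^{\infty}(\M)$ by Sobolev embedding.

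Both quantitative estimates \eqref{eq:estabdeigenf} and \eqref{eq:estabdeigenvalue} rest on a single ingredient, a uniform local Weyl bound for the spectral function
$$e(\lambda,\xi,\xi):=\sum_{\lambda_m\le\lambda}|f_m(\xi)|^2\le C\lambda^{d/2},\qquad \xi\in\M,\ \lambda\ge 1,$$
which is the technical heart of the argument. I would prove it either by constructing a short-time wave parametrix (the Hadamard--H\"ormander approach) followed by a Tauberian argument, or equivalently via Gaussian upper bounds on the short-time heat kernel together with the spectral identity $K_t(\xi,\xi)=\sum_m e^{-\lambda_m t}|f_m(\xi)|^2$. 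Granting this bound, setting $\lambda=\lambda_\ell$ and retaining only the $\ell$-th term gives $|f_\ell(\xi)|^2\le C\lambda_\ell^{d/2}$, which is \eqref{eq:estabdeigenf}. Integrating the bound over $\M$ majorises the counting function $N(\lambda):=\#\{m:\lambda_m\le\lambda\}$ by a multiple of $\lambda^{d/2}$, and applying this at $\lambda=\lambda_\ell$ bounds the multiplicity $N_{\lambda_\ell}$ as in \eqref{eq:estabdeigenvalue}.

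For density of $\operatorname{span}\{f_\ell\}$ in $C^{\ell}(\M)$, I would first note that for $u\in C^{\infty}(\M)$ the Fourier coefficients $\hat u_m:=\langle u,f_m\rangle_{L^2}$ satisfy $\lambda_m^N|\hat u_m|=|\langle\Delta^N u,f_m\rangle|\le\|\Delta^N u\|_{L^2}$ for every $N$, so $\hat u_m$ decays faster than any polynomial in $\lambda_m$. Combining this with the Sobolev-type estimate $\|f_m\|_{C^{\ell}(\M)}\le C(1+\lambda_m)^{(\ell+\lceil d/2\rceil+1)/2}$ and Weyl's counting bound $N(\lambda)\le C\lambda^{d/2}$ makes the eigenfunction expansion of $u$ converge absolutely in $C^{\ell}(\M)$ to $u$; since $C^{\infty}(\M)$ is dense in $C^{\ell}(\M)$, density follows. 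The main obstacle throughout is the local Weyl bound on $e(\lambda,\xi,\xi)$; once that is in hand, the rest of the theorem assembles from standard spectral theory and Sobolev embedding.
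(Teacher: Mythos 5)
Your outline is correct and is essentially the standard argument behind this result, which the paper does not prove but simply imports from \citep{DeVito2019} and \citep{Donnelly2006}: compact resolvent plus the spectral theorem for the basis, the pointwise Weyl/heat-kernel bound $\sum_{\lambda_m\le\lambda}|f_m(\xi)|^2\le C\lambda^{d/2}$ for both \eqref{eq:estabdeigenf} and (after integration over $\M$) \eqref{eq:estabdeigenvalue}, and rapid decay of Fourier coefficients of smooth functions combined with Sobolev embedding for density in $C^{\ell}(\M)$. You correctly identify the local Weyl bound as the only nontrivial ingredient, and either of the two routes you name (wave parametrix with a Tauberian argument, or the on-diagonal heat-kernel upper bound $K_t(\xi,\xi)\le Ct^{-d/2}$) does establish it on a compact manifold.
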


Since we will work with kernels which are defined on the product manifold $\M\times \M$ we note:

For $\M$ as in the last theorem, the manifold $\M\times\M$ equipped with the product metric allows a decomposition of all functions $L^2(\M\times\M)$ using the orthonormal basis functions $f_{\ell}(\xi)\overline{f}_{\ell'}(\zeta)$ for $(\xi,\zeta)\in \M\times\M$ of eigenfunctions of the Laplacian on $\M\times\M$, the eigenvalue corresponding to this basis function is $\l(\lambda_{\ell}+\lambda_{\ell'}\r)$.

We assume throughout this paper that $K$ is  square-integrable and can be represented as
 \begin{equation}\label{eq:Kernform} K(\xi, \zeta)=\sum_{\ell,\ell'=1}^{\infty}a_{\ell,\ell'}f_{\ell}(\xi)\overline{f_{\ell'}(\zeta)},\qquad \forall \xi,\zeta \in \M ,
 \end{equation}
where the $f_{\ell}$ form an orthonormal basis of the eigenfunctions of the Laplace-Beltrami operator on $\M$ and $a_{\ell,\ell'}\in \C$. We further assume the kernel to be Hermitian, which corresponds to $a_{\ell,\ell'}=\overline{a}_{\ell',\ell}$ for all $\ell,\ell'\in \N$.

It is well known and used in the characterisation above that every function in $L^2(\M)$ can be represented as an expansion of the form 
\begin{equation}\label{eq:Serrep}
g(\xi)= \sum_{\ell=1}^{\infty}\hat{g}_{\ell}f_{\ell}(\xi),\qquad \text{ with } \hat{g}_{\ell}=\int_{\M}g(\xi)\overline{f_{\ell}(\xi)}\,d \mu( \xi),
\end{equation}
where $d\mu$ is $\det(g_{i,j})^{1/2}dx$.
For this expansion the Parseval equation holds: 
$$\Vert g\Vert_{L^2(\M)}^2=\sum_{\ell=1}^{\infty} \vert \hat{g}_{\ell}\vert ^2.$$

Also we will need the definition of the Sobolev spaces $H^r(\M)$ to be the subspace of $L^2(\M)$ with finite norm
$$ \Vert g \Vert_{H^r(\M)}:= \left( \sum_{\ell=1}^{\infty} (1+\lambda_{\ell})^{r} \vert \hat{g}_{\ell}\vert^2 \right)^{1/2},$$
where $\lambda_{\ell}$ is the eigenvalue corresponding to the eigenfunction $f_{\ell}$. We note that in this case $H^r(\M \times \M)$ is a Sobolev space with norm  
$$ \Vert K \Vert_{H^r(\M\times \M)}:= \left( \sum_{\ell,\ell'=1}^{\infty} (1+\lambda_{\ell}+\lambda_{\ell'})^{r} \vert a_{\ell,\ell'}\vert^2\right)^{1/2}.$$

We will also cite the continuous embedding lemma from Theorem 5 of \citep{DeVito2019}.
\begin{theorem}
Let $\M$ be a compact manifold. For any $r>d/2$, $H^r(\M)\subset C(\M)$. 
\end{theorem}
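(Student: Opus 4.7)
The plan is to show that every $g\in H^r(\M)$ has a continuous representative whenever $r>d/2$. I would start from the Fourier expansion $g=\sum_\ell \hat g_\ell f_\ell$ (convergent in $L^2(\M)$) and study the partial sums $S_N(\xi)=\sum_{\ell\leq N}\hat g_\ell f_\ell(\xi)$, which are continuous since every $f_\ell$ is smooth. Because $(C(\M),\|\cdot\|_\infty)$ is a Banach space, it suffices to prove that $(S_N)$ is Cauchy in sup-norm: the uniform limit would then be continuous and agree with $g$ almost everywhere, so the $L^2$-class $g\in H^r(\M)$ would contain a representative in $C(\M)$.

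For the tail estimate I would apply Cauchy--Schwarz after inserting the weight $(1+\lambda_\ell)^r$. For any $M>N$ and $\xi\in \M$,
\begin{equation*}
|S_M(\xi)-S_N(\xi)|\leq \Bigl(\sum_{N<\ell\leq M}(1+\lambda_\ell)^r|\hat g_\ell|^2\Bigr)^{1/2}\Bigl(\sum_{N<\ell\leq M}(1+\lambda_\ell)^{-r}|f_\ell(\xi)|^2\Bigr)^{1/2}.
\end{equation*}
The first factor is a tail of $\|g\|_{H^r(\M)}^2$ and vanishes as $N\to\infty$. The argument thus reduces to showing that $W(\xi):=\sum_\ell(1+\lambda_\ell)^{-r}|f_\ell(\xi)|^2$ is bounded uniformly in $\xi\in\M$ whenever $r>d/2$.

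Obtaining this uniform bound is the main obstacle, and it forces one beyond the eigenfunction estimates stated in the excerpt. Substituting the pointwise bound $|f_\ell(\xi)|^2\leq C^2\lambda_\ell^{d/2}$ from \eqref{eq:estabdeigenf}, the multiplicity estimate \eqref{eq:estabdeigenvalue}, and Weyl's asymptotic $\#\{\ell:\lambda_\ell\leq T\}\sim c\,T^{d/2}$, the naive computation only gives convergence of $W(\xi)$ for $r>d$, which is too weak. The sharp threshold $r>d/2$ comes from the \emph{local Weyl law} for the spectral projector, $\sum_{\lambda_\ell\leq T}|f_\ell(\xi)|^2\leq C\,T^{d/2}$ with $C$ independent of $\xi$, which on a compact Riemannian manifold follows from the short-time on-diagonal heat kernel estimate $\sum_\ell e^{-t\lambda_\ell}|f_\ell(\xi)|^2\leq C'\,t^{-d/2}$ valid for $t\in(0,1]$. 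Multiplying that identity by $t^{r-1}$, integrating over $t\in(0,1]$, and absorbing the finitely many low-frequency terms into a constant yields $W(\xi)\leq C''\int_0^1 t^{r-1-d/2}\,dt+O(1)$, finite precisely when $r>d/2$ and uniform in $\xi$ by compactness of $\M$. Combined with the vanishing of the first Cauchy--Schwarz factor, this makes $(S_N)$ uniformly Cauchy, concluding the proof and, as a by-product, giving a continuous embedding $H^r(\M)\hookrightarrow C(\M)$.
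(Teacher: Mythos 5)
Your argument is correct, but there is nothing in the paper to compare it against: the authors do not prove this statement at all, they import it verbatim as Theorem 5 of the cited reference of De Vito et al., so any proof you supply is by definition a different route. Your route is the standard one and it is sound. The Cauchy--Schwarz splitting reduces everything to a uniform bound on $W(\xi)=\sum_\ell(1+\lambda_\ell)^{-r}|f_\ell(\xi)|^2$, and you are right that the eigenfunction bound \eqref{eq:estabdeigenf} combined with Weyl counting only yields convergence for $r>d$, so the sharp threshold $r>d/2$ genuinely requires the pointwise spectral-function bound $\sum_{\lambda_\ell\le T}|f_\ell(\xi)|^2\le CT^{d/2}$ rather than the termwise estimates listed in the paper. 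Your derivation of the uniform bound from the on-diagonal heat kernel estimate via $\Gamma(r)(1+\lambda)^{-r}=\int_0^\infty t^{r-1}e^{-t(1+\lambda)}\,dt$ is correct; the $t\ge 1$ portion of the integral is harmless because $\sum_\ell e^{-t\lambda_\ell}|f_\ell(\xi)|^2$ is nonincreasing in $t$, and the finitely many small eigenvalues are absorbed into a constant as you say. The one thing to flag is that the bound $\sum_\ell e^{-t\lambda_\ell}|f_\ell(\xi)|^2\le C't^{-d/2}$ for $t\in(0,1]$, uniform over the compact manifold, is itself a nontrivial input (parametrix construction or Li--Yau); you state it honestly as an external fact, which is the same logical status the paper assigns to the entire theorem. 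The net effect is that your version makes explicit exactly which analytic ingredient the embedding rests on, at the cost of importing the heat kernel estimate instead of the finished embedding theorem.
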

We further cite, from the same paper (Proposition 2), the following result on the Sobolev spaces.
\begin{lemma} Let $\M$ be compact. Given $s\in[0,\infty)$, the Sobolev space $H^s(\M)$ is a reproducing kernel Hilbert space if and only if for all $\xi \in\M$ one has
\begin{equation}\label{eq:rKHcond}
\sum_{\ell=0}^{\infty}(1+\lambda_{\ell})^{-s/2}\vert f_{\ell}(\xi)\vert^2< \infty.
\end{equation}
\end{lemma}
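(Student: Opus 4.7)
The plan is to identify explicitly the Riesz representer of the evaluation functional $\delta_\xi : g \mapsto g(\xi)$ in terms of the eigenbasis $\{f_\ell\}$, and then translate the existence of such a representer into the stated summability condition. By definition $H^s(\M)$ is a reproducing kernel Hilbert space iff $\delta_\xi$ is continuous for every $\xi \in \M$, which via the Riesz representation theorem is equivalent to the existence of $K_\xi \in H^s(\M)$ with $g(\xi) = \langle g, K_\xi\rangle_{H^s(\M)}$ for all $g \in H^s(\M)$.

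For the sufficient direction I would guess the candidate representer from the spectral picture by setting
\[
K_\xi := \sum_{\ell=1}^\infty (1+\lambda_\ell)^{-s}\,\overline{f_\ell(\xi)}\, f_\ell,
\]
whose Fourier coefficients are $\widehat{K_\xi}_\ell = (1+\lambda_\ell)^{-s}\overline{f_\ell(\xi)}$. A direct computation shows that its $H^s$-norm squared reduces, after cancelling the weight in the norm definition, precisely to the expression on the left-hand side of \eqref{eq:rKHcond}. Under the assumed summability the series defining $K_\xi$ therefore converges in $H^s(\M)$, and taking the $H^s$ inner product of any $g = \sum_\ell \hat g_\ell f_\ell$ against $K_\xi$ yields $\sum_\ell \hat g_\ell f_\ell(\xi) = g(\xi)$, verifying the reproducing property.

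For the necessary direction, suppose $H^s(\M)$ is a RKHS and let $K_\xi$ be the Riesz representer of $\delta_\xi$. Each $f_m$ belongs to $H^s(\M)$ with $\|f_m\|_{H^s}^2 = (1+\lambda_m)^s < \infty$, so testing the reproducing identity against $f_m$ yields $f_m(\xi) = \langle f_m, K_\xi\rangle_{H^s} = (1+\lambda_m)^s \,\overline{\widehat{K_\xi}_m}$, which pins down every Fourier coefficient of $K_\xi$ in terms of $\xi$. The requirement $\|K_\xi\|_{H^s}^2 < \infty$ then immediately recovers the summability condition \eqref{eq:rKHcond}.

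The main care point is not any single identity but the justification of the termwise manipulation: absolute convergence of $\sum_\ell \hat g_\ell f_\ell(\xi)$ under the summability hypothesis, obtained by Cauchy--Schwarz applied to the splitting of the summand into $(1+\lambda_\ell)^{s/2}\hat g_\ell$ and $(1+\lambda_\ell)^{-s/2}\overline{f_\ell(\xi)}$, is what makes the pointwise reproducing identity well-defined on all of $H^s(\M)$ and simultaneously verifies the continuity of $\delta_\xi$. Once this is in place, both implications follow from Parseval's identity applied in $H^s(\M)$.
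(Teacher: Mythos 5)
The paper does not actually prove this lemma --- it is quoted verbatim from De Vito et al.\ (Proposition 2 of that reference) --- so your argument is supplying a proof where the paper gives none. Your route is the standard one: identify the would-be Riesz representer $K_\xi$ spectrally, check that its $H^s$-norm is finite exactly under the summability condition, and use the Cauchy--Schwarz splitting $\hat g_\ell f_\ell(\xi) = \bigl((1+\lambda_\ell)^{s/2}\hat g_\ell\bigr)\bigl((1+\lambda_\ell)^{-s/2}f_\ell(\xi)\bigr)$ to justify the pointwise identity $g(\xi)=\sum_\ell \hat g_\ell f_\ell(\xi)$ and the boundedness of $\delta_\xi$. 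Both directions are structurally correct, and you rightly isolate the termwise-manipulation issue as the real content.

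There is, however, one concrete discrepancy you assert away rather than resolve. With the paper's definition $\lV g\rV_{H^s}^2=\sum_\ell(1+\lambda_\ell)^{s}\lv\hat g_\ell\rv^2$, your representer $K_\xi$ with coefficients $(1+\lambda_\ell)^{-s}\overline{f_\ell(\xi)}$ has
$\lV K_\xi\rV_{H^s}^2=\sum_\ell(1+\lambda_\ell)^{-s}\lv f_\ell(\xi)\rv^2$,
and the same quantity (with exponent $-s$, not $-s/2$) is the second factor in your Cauchy--Schwarz bound. This is \emph{not} ``precisely the expression on the left-hand side of \eqref{eq:rKHcond}'', which carries the weight $(1+\lambda_\ell)^{-s/2}$. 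So either the lemma as transcribed in the paper carries a normalization inherited from the cited source (where the Sobolev norm is defined with weight $(1+\lambda_\ell)^{s/2}$), or the exponent in \eqref{eq:rKHcond} is a typo; in either case your proof, as written, establishes the criterion $\sum_\ell(1+\lambda_\ell)^{-s}\lv f_\ell(\xi)\rv^2<\infty$ relative to the norm defined in this paper, and you should state explicitly which normalization you are using rather than claiming the two expressions coincide. A second, minor point: in the necessity direction the identity $f_m(\xi)=\langle f_m,K_\xi\rangle_{H^s}$ tacitly uses that the evaluation functional of the RKHS agrees with the actual pointwise value of the continuous representative $f_m$; this is standard but deserves a sentence.
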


\section{Characterisation of (strictly) positive definite kernels}
We start by characterising strict positive definiteness of a kernel using properties of the coefficients $a_{\ell,\ell'}$. The first two results we state are closely connected to the results of Dyn, Narcowich and Ward \citep{Dyn1997} which used distributions in certain Sobolev spaces instead of sequences to describe positive definiteness.  The remaining results of this chapter generalise result proven for the two-sphere and axially-symmetric kernels in \citep{Jaeger2021} and \citep{Buhmann2022} by the second author together with Prof. Buhamnn. We give full proofs for this more general case for completeness and readability, even though many of the proof techniques are similar.  We first define the vector space  consisting of complex sequences 
$$Y:=\operatorname{span}\left \lbrace \left( f_{\ell}(\xi)\right )_{\ell=1}^{\infty}  \big\vert \ \xi \in \M \right \rbrace.$$

If  $K$ is continuous and therefore $K\in L^2(\M \times \M)$  the series \eqref{eq:Kernform} will converge for all $\xi,\zeta  \in \M $. Another consequence is that  
$$\int_{\M} \int_{\M}(K(\xi,\zeta))^2\,d\mu(\xi) \,d \mu( \zeta) < \infty, $$
which ensures $\sum_{\ell, \ell'=1}^{\infty} \vert a_{\ell,\ell'}\vert^2 < \infty$ because of the orthonormality and thereby makes the linear operator 
\begin{equation}\label{eq:DefOp}
 A(y):=\left( \sum_{\ell=1}^{\infty}  a_{\ell,\ell'}y_{\ell}\right)_{\ell'=1}^{\infty},\qquad y\in\ell^2(\N) \text{ or } y\in Y,
\end{equation}
bounded on $\ell^2(\N) $, by which we denote the set of square-summable complex-valued sequences.
 We define the inner product 
 $$\langle x, y \rangle:=\sum_{\ell=1}^{\infty}x_{\ell}\overline{y_{\ell}}$$
 and will use it for $x,y\in \ell^2(\N)$ as well as for $x,y\in Y$. 
Even though we do not know whether  $A(y)$ is in $\ell^2(\N)$ for elements $y\in Y$, we know that any $y\in Y$ can be represented as $y= \left( \sum_{\xi\in \Xi}c_{\xi} f_{\ell}(\xi)\right )_{\ell=1}^{\infty}$  and therefore 
$$\langle A(y),y \rangle=\sum_{\ell=1}^{\infty}a_{\ell,\ell'}\sum_{\xi\in \Xi}c_{\xi} f_{\ell}(\xi)\overline{\sum_{\zeta\in \Xi}c_{\zeta} f_{\ell}(\zeta)}=\sum_{\xi,\zeta\in\Xi}c_{\xi}\overline{c_{\zeta}}K(\xi,\zeta)<\infty$$ 
where we are allowed to exchange the order of the summation because $K$ is continuous.

\begin{lemma}\label{LE:CharPosDef} Let $K: \M \times \M \rightarrow \C$ be a continuous Hermitian kernel given in the form \eqref{eq:Kernform}. Then $K$ is positive definite if and only if $\langle A(y),y\rangle\geq 0$ for all $y\in Y$, where $A$ is as in \eqref{eq:DefOp}. 
\end{lemma}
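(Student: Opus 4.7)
The plan is to use the identity
$$\langle A(y),y\rangle \;=\; \sum_{\xi,\zeta\in\Xi} c_\xi\,\overline{c_\zeta}\,K(\xi,\zeta)$$
that has already been recorded in the paragraph preceding the lemma, valid for any $y\in Y$ written as $y_\ell=\sum_{\xi\in\Xi}c_\xi f_\ell(\xi)$ with $\Xi\subset\M$ finite and $c_\xi\in\C$. Once this identity is in hand, the lemma reduces to the tautology that quantifying over $y\in Y$ is, by the very definition of $Y$ as the span of the sequences $(f_\ell(\xi))_{\ell\geq 1}$, the same as quantifying over all finite $\Xi\subset\M$ and all coefficient choices $(c_\xi)_{\xi\in\Xi}$.

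For the forward implication, I would assume $K$ is positive definite in the sense of Definition \ref{DF:SPD} and take an arbitrary $y\in Y$. Writing $y$ in the form above, positive definiteness of the matrix $K_\Xi$ acting on $\C^{|\Xi|}$ gives $\sum_{\xi,\zeta\in\Xi}c_\xi\overline{c_\zeta}K(\xi,\zeta)\geq 0$, and the displayed identity then yields $\langle A(y),y\rangle\geq 0$.

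For the converse, I would fix an arbitrary finite set of distinct points $\Xi\subset\M$ and any $(c_\xi)_{\xi\in\Xi}\subset\C$, and define $y\in Y$ by $y_\ell:=\sum_{\xi\in\Xi}c_\xi f_\ell(\xi)$. The hypothesis gives $\langle A(y),y\rangle\geq 0$, and applying the identity in the reverse direction produces $\sum_{\xi,\zeta\in\Xi}c_\xi\overline{c_\zeta}K(\xi,\zeta)\geq 0$. Since $K$ is Hermitian, this is exactly the statement that $K_\Xi$ induces a positive operator on $\C^{|\Xi|}$; as $\Xi$ and $(c_\xi)$ were arbitrary, $K$ is positive definite.

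The only point that required any real work is the identity itself, and in particular the exchange of the double series in $(\ell,\ell')$ with the finite double sum over $\Xi$; this is legitimate because $\Xi$ is finite, $K$ is continuous on the compact manifold $\M\times\M$, and the coefficient sequence $(a_{\ell,\ell'})$ lies in $\ell^2(\N\times\N)$ so that the expansion \eqref{eq:Kernform} converges pointwise to $K(\xi,\zeta)$. This justification is precisely what the discussion preceding the lemma establishes, so no further obstacle remains and the proof is essentially a bookkeeping exercise built on that identity.
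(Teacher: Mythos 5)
Your proposal is correct and follows essentially the same route as the paper: both directions hinge on the identity $\langle A(y),y\rangle=\sum_{\xi,\zeta\in\Xi}c_\xi\overline{c_\zeta}K(\xi,\zeta)$ together with the observation that ranging over $y\in Y$ is the same as ranging over finite point sets and coefficient vectors, with the interchange of summation justified by continuity of $K$. The only cosmetic difference is that the paper proves the second implication by contraposition while you argue it directly.
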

\begin{proof} 
Let   $\langle A(y),y\rangle\geq 0$ for all  $y \in Y$, then for arbitrary sets of distinct points $\Xi \subset \M$ and $c\in \C^{\vert \Xi \vert}$ 
\begin{align*}
\sum_{\xi \in \Xi} \sum_{\zeta \in \Xi}c_{\xi} \overline{c_{\zeta}} K(\xi, \zeta)&= \sum_{\xi \in \Xi} \sum_{\zeta\in \Xi}c_{\xi} \overline{c_{\zeta} }\sum_{\ell,\ell'=1}^{\infty}a_{\ell,\ell'}f_{\ell}(\xi)\overline{f_{\ell'}(\zeta)} \\
&=\sum_{\ell,\ell'=1}^{\infty}y_{\ell} a_{\ell, \ell'}\overline{y_{\ell'}}=\langle A(y),y\rangle \geq 0, \\
&\text{with }   y =\left(\sum_{\xi \in \Xi}c_{\xi}f_{1}(\xi)\right)_{\ell=1}^{\infty} \in Y.
\end{align*}
The order of summation in the second line can be exchanged because the series converge for all $\xi$ since $K$ is continuous.

To prove the opposite direction we assume there exists $y \in Y$ with $\langle A(y),y \rangle <0.$
Then there exists a finite set of points $\Xi \subset \M$ such that 
$y=\left(\sum_{\xi \in \Xi}c_{\xi}f_{1}(\xi) \right)_{\ell=1}^{\infty}$  
and
$$ \sum_{\xi \in \Xi} \sum_{\zeta\in \Xi}c_{\xi} \overline{c_{\zeta} }K(\xi, \zeta) =\sum_{\ell=1}^{\infty} \sum_{\ell'=1}^{\infty}y_{\ell}a_{\ell, \ell'}\overline{y_{\ell'}}< 0.$$
\end{proof}

We can now state a characterisation of positive definiteness, which is more convenient since it only requires properties of $A$ as an operator on the Hilbert space $\ell^2(\N)$.  
\begin{corollary}\label{COR:PosDefl2}
Let $K: \M \times \M \rightarrow \C$ be a continuous Hermitian kernel given in the form \eqref{eq:Kernform}. Then $K$ is positive definite if and only if $A$ is a positive operator on $\ell^2(\N)$, where $A$ is as in \eqref{eq:DefOp}. 
\end{corollary}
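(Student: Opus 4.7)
My strategy is to reduce via Lemma \ref{LE:CharPosDef}, which expresses positive definiteness of $K$ as positivity of the quadratic form $\langle A(\cdot),\cdot\rangle$ on the test space $Y$, and then pass between $Y$ and $\ell^2(\N)$. One direction is a truncation-and-continuity argument; the converse routes through the integral operator induced by $K$.

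For the direction ``$A$ positive on $\ell^2(\N)\Rightarrow K$ positive definite'', I would verify the hypothesis of Lemma \ref{LE:CharPosDef}. Given $y\in Y$ arising from a finite $\Xi\subset\M$ with coefficients $(c_\xi)_{\xi\in\Xi}$, the truncated sequence $y^{(N)}=(y_1,\ldots,y_N,0,0,\ldots)$ is finitely supported, hence in $\ell^2(\N)$, so $\langle A(y^{(N)}),y^{(N)}\rangle\geq 0$ by hypothesis. Since $\Xi$ is finite, this rearranges to $\sum_{\xi,\zeta\in\Xi} c_\xi\overline{c_\zeta}\sum_{\ell,\ell'\leq N} a_{\ell,\ell'}f_\ell(\xi)\overline{f_{\ell'}(\zeta)}$; letting $N\to\infty$ and invoking the pointwise convergence of the expansion \eqref{eq:Kernform} yields $\sum_{\xi,\zeta} c_\xi\overline{c_\zeta}K(\xi,\zeta)\geq 0$, which is the required positive definiteness.

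For the reverse implication I would proceed through the integral operator $T_K g(\xi):=\int_\M K(\xi,\zeta)g(\zeta)\,d\mu(\zeta)$ on $L^2(\M)$. The intermediate step is the classical Mercer-type observation that a continuous positive definite kernel on a compact space makes $T_K$ a positive operator, that is, $\int_\M\int_\M K(\xi,\zeta)g(\zeta)\overline{g(\xi)}\,d\mu(\xi)\,d\mu(\zeta)\geq 0$ for every $g\in L^2(\M)$. This is established by approximating $g$ first by a continuous function and then by a step function subordinate to a fine partition of $\M$, so that the double integral is approximated by Riemann-type sums of the form $\sum_{j,j'}c_j\overline{c_{j'}}K(\xi_j,\xi_{j'})$; each such sum is nonnegative by positive definiteness of $K$, and uniform continuity of $K$ on the compact $\M\times\M$ legitimises passing to the limit. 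Given this, for any $x\in\ell^2(\N)$ I would set $g_x(\xi):=\sum_\ell \overline{x_\ell}f_\ell(\xi)\in L^2(\M)$ (convergence guaranteed by Parseval), and verify via orthonormality of the $f_\ell$ together with \eqref{eq:Kernform} that $\int_\M\int_\M K(\xi,\zeta)g_x(\zeta)\overline{g_x(\xi)}\,d\mu(\xi)\,d\mu(\zeta)=\sum_{\ell,\ell'}a_{\ell,\ell'}x_\ell\overline{x_{\ell'}}=\langle A(x),x\rangle$, giving the required nonnegativity.

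The main obstacle is the Mercer-type positivity of $T_K$; the remaining work consists of routine interchanges of absolutely convergent series, legitimate because $\sum_{\ell,\ell'}|a_{\ell,\ell'}|^2<\infty$ (so $A$ is Hilbert--Schmidt on $\ell^2(\N)$ and $K\in L^2(\M\times\M)$). A small but easy-to-miss bookkeeping point is the choice of complex conjugate in $g_x(\xi)=\sum_\ell \overline{x_\ell}f_\ell(\xi)$: using $\overline{x_\ell}$ rather than $x_\ell$ is exactly what produces $\langle A(x),x\rangle$ and not its conjugate.
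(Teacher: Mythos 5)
Your proposal is correct, but it takes a genuinely different route from the paper: the paper's entire proof of this corollary is a one-line citation to Theorem 2.1 of Dyn--Narcowich--Ward (the reference \citep{Dyn1997}) with the Sobolev parameter $s=0$, whereas you give a self-contained argument. Your forward direction (truncate $y\in Y$ to $y^{(N)}\in\ell^2(\N)$, use positivity of $A$ on finitely supported sequences, and pass to the limit using the pointwise convergence of \eqref{eq:Kernform} that the paper itself assumes) and your reverse direction (Mercer-type positivity of the integral operator $T_K$ via Riemann-sum approximation on the compact manifold, followed by testing against $g_x=\sum_\ell \overline{x_\ell}f_\ell$ and using orthonormality to recover $\langle A(x),x\rangle$) together amount to a direct proof of the special case of the cited theorem that the corollary needs. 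The only cosmetic imprecision is your phrase ``absolutely convergent series'' for the interchange of integration and summation: $\sum_{\ell,\ell'}|a_{\ell,\ell'}|^2<\infty$ gives $L^2(\M\times\M)$ convergence of the expansion, which is what actually justifies computing the double integral term by term (the final series $\sum_{\ell,\ell'}a_{\ell,\ell'}x_\ell\overline{x_{\ell'}}$ is then absolutely convergent by Cauchy--Schwarz, as you use). What each approach buys: the paper's citation is short and places the result in the Sobolev-scale framework of \citep{Dyn1997}; your argument makes the corollary self-contained and exposes exactly where continuity of $K$ and compactness of $\M$ enter.
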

\begin{proof}
The result is easily deduced from Theorem 2.1 of \citep{Dyn1997}, by choosing $s=0$.
\end{proof}

To solve the interpolation problem introduced in Section 1 positive definiteness is not sufficient. We have to study the stricter assumption of strict positive definiteness. Applying the notation used in \Cref{LE:CharPosDef} we can state:

\begin{theorem}\label{THM:SPDCHar}
Let $K: \M \times \M \rightarrow \C$ be a Hermitian continuous kernel given in the form \eqref{eq:Kernform}. Then $K$ is strictly positive definite if and only if $\langle A(y),y\rangle >0$ for all  $y \in  Y\setminus \lbrace 0 \rbrace $, where $A$ is as in \eqref{eq:DefOp}. 
\end{theorem}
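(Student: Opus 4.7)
The plan is to read the proof of Lemma~\ref{LE:CharPosDef} with strict inequalities, once we verify the one new point not already handled there: the translation between "$c \neq 0$" on a finite set of nodes and "$y \neq 0$" in the sequence space $Y$. Concretely, the remark preceding Lemma~\ref{LE:CharPosDef} establishes the identity
\[
\sum_{\xi,\zeta \in \Xi} c_\xi \overline{c_\zeta}\, K(\xi,\zeta) \;=\; \langle A(y), y\rangle, \qquad y = \Bigl(\sum_{\xi \in \Xi} c_\xi f_\ell(\xi)\Bigr)_{\ell=1}^{\infty} \in Y,
\]
for arbitrary finite distinct $\Xi \subset \M$ and $c \in \C^{|\Xi|}$; the exchange of summation needed is legitimate because $K$ is continuous. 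Strict positive definiteness of $K$ and strict positivity of $\langle A(\cdot),\cdot\rangle$ on $Y \setminus\{0\}$ are then the same statement provided each nonzero $c$ corresponds to a nonzero $y$ and vice versa.

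For the forward direction, given $y \in Y \setminus \{0\}$, choose any representation $y = (\sum_{\xi \in \Xi} c_\xi f_\ell(\xi))_\ell$ with distinct $\xi \in \Xi$; necessarily $c \neq 0$, because $c = 0$ would force $y = 0$. Strict positive definiteness of $K$ then gives $\sum_{\xi,\zeta} c_\xi \overline{c_\zeta}\, K(\xi,\zeta) > 0$, which by the identity above is exactly $\langle A(y), y\rangle > 0$. For the reverse direction, take distinct $\Xi \subset \M$ and nonzero $c \in \C^{|\Xi|}$, and let $y$ be the associated sequence. The same identity reduces the problem to showing $y \neq 0$; then the hypothesis $\langle A(y),y\rangle > 0$ finishes the proof. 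To see $y \neq 0$, suppose otherwise: then $\sum_{\xi \in \Xi} c_\xi f_\ell(\xi) = 0$ for every $\ell$, hence $\sum_\xi c_\xi g(\xi) = 0$ for every $g$ in $\operatorname{span}\{f_\ell\}$. By the Sturm--Liouville theorem quoted above, this span is dense in $C(\M)$, so $\sum_\xi c_\xi g(\xi) = 0$ for all $g \in C(\M)$. Since $\M$ is a compact Hausdorff space and the nodes are distinct, Urysohn's lemma produces continuous functions separating each $\xi_0 \in \Xi$ from the others, forcing $c_{\xi_0} = 0$ for all $\xi_0$, a contradiction.

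The only genuinely new ingredient compared with Lemma~\ref{LE:CharPosDef} is the linear-independence argument at the end, i.e.\ that the point-evaluation functionals at distinct $\xi \in \M$ are linearly independent when restricted to $\operatorname{span}\{f_\ell\}$. This is the step I expect to be the main (and really the only) obstacle; once the density of $\operatorname{span}\{f_\ell\}$ in $C(\M)$ from the Sturm--Liouville theorem is invoked, the standard point-separation trick closes the gap, and everything else is a strict-inequality rereading of the previous lemma.
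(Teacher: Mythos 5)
Your proposal is correct and follows essentially the same route as the paper: reduce both directions to the identity $\sum_{\xi,\zeta}c_\xi\overline{c_\zeta}K(\xi,\zeta)=\langle A(y),y\rangle$ and then show that $y=0$ if and only if $c=0$. The one place you differ is in justifying that last equivalence: the paper appeals to ``linear independence of evaluation functionals on $L^2(\M)$'' (which is not literally well-posed, since $L^2$ elements have no pointwise values), whereas your argument via density of $\operatorname{span}\{f_\ell\}$ in $C(\M)$ plus Urysohn separation is the cleaner and fully rigorous version of the same step.
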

\begin{proof}
For the first direction we assume that $K$ is strictly positive definite. Then it will also be positive definite and thus $\langle A(y),y\rangle \geq0$ for all $y \in Y$ according to \Cref{LE:CharPosDef}. Further for all elements $y\in Y \setminus \lbrace 0\rbrace$ there exists  a set $\Xi$ of distinct points and coefficients $c_{\xi}$ not all zero s.t. $y =\left(\sum_{\xi \in \Xi}c_{\xi}f_{\ell}(\xi)\right)_{\ell=1}^{\infty}$.
Then again 
\begin{align*}
\sum_{\xi \in \Xi} \sum_{\zeta \in \Xi}c_{\xi} \overline{c_{\zeta}} K(\xi, \zeta)=\langle A(y),y\rangle >0,
\end{align*}
as long as $y\neq0$. Completing the first direction.

Now assume $\langle A( y) ,y\rangle > 0$ for all  $y \in  Y\setminus \lbrace 0 \rbrace$. Then $K$ is positive definite following from \Cref{LE:CharPosDef}. Further we show that for a finite non empty set of distinct points $\Xi\subset \M$
$$y=\left(\sum_{\xi \in \Xi}c_{\xi}f_{\ell}(\xi) \right)_{\ell=1}^{\infty}= 0\in \C^{\infty}$$  if and only if  $c=0\in \C^{\vert \Xi \vert}$.
This is a consequence of the $f_{\ell}$ forming a basis of $L^2(\M)$ and 
$$\sum_{\xi \in \Xi}c_{\xi}f_{\ell}(\xi)=0,\qquad  \forall \ell \in \N,$$
would implicate   $\sum_{\xi \in \Xi}c_{\xi}f(\xi)=0$ for all $f\in L^2(\M)$, but the evaluation functionals on $L^2(\M)$ are linearly independent. This proves that 
$$\sum_{\xi \in \Xi} \sum_{\zeta \in \Xi}c_{\xi} \overline{c_{\zeta}} K(\xi, \zeta)=\langle A(y),y \rangle>0,$$
 as long as $0\neq c\in \C^{\vert \Xi \vert}$.
\end{proof}

We will find that strict positive definiteness on $\ell^2(\N)$ is not necessary for strict positive definiteness of the kernel.
To be able to work without the assumptions on smoothness properties of the kernel we define the summability property on the coefficient matrix as follows. 
Let 
$$\tilde{A}^s=\l(\tilde{a}_{\ell,\ell'}\r)_{\ell,\ell'=0}^{\infty},\text{ with }\tilde{a}_{\ell,\ell'}^s=\l(1+\lambda_{\ell}\r)^{s}\vert a_{\ell,\ell'} \vert \l(1+\lambda_{\ell'}\r)^{s}.$$ We note the following implication for the smoothness of a kernel:
\begin{lemma}
Let $K$ be a kernel of the form \eqref{eq:Kernform}. If
\begin{equation}\label{eq:Abssumprop}
\sum_{\ell,\ell'=0}^\infty \vert \tilde{a}^s_{\ell,\ell'}\vert<\infty,\quad s>0, 
\end{equation}
then $K\in H^{2s} (\M\times \M)$.
\end{lemma}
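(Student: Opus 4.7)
The plan is to show $\|K\|_{H^{2s}(\M\times\M)}^2 < \infty$ directly by bounding the defining sum in two elementary steps: replace the symmetric weight $(1+\lambda_\ell+\lambda_{\ell'})^{2s}$ by the product weight $(1+\lambda_\ell)^{2s}(1+\lambda_{\ell'})^{2s}$, and then use the inclusion $\ell^1 \hookrightarrow \ell^2$ on the doubly-indexed sequence $\tilde a^s$.

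First I would verify the pointwise inequality
\begin{equation*}
1+\lambda_\ell+\lambda_{\ell'} \;\leq\; (1+\lambda_\ell)(1+\lambda_{\ell'}),
\end{equation*}
which is immediate from $\lambda_\ell,\lambda_{\ell'}\geq 0$ since expanding the right-hand side produces the extra nonnegative term $\lambda_\ell\lambda_{\ell'}$. Because $s>0$ (so that $2s\geq 0$) and the map $t\mapsto t^{2s}$ is monotone on $[1,\infty)$, this yields
\begin{equation*}
(1+\lambda_\ell+\lambda_{\ell'})^{2s} \;\leq\; (1+\lambda_\ell)^{2s}(1+\lambda_{\ell'})^{2s}.
\end{equation*}

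Multiplying by $|a_{\ell,\ell'}|^2$ and summing then gives
\begin{equation*}
\|K\|_{H^{2s}(\M\times\M)}^2
\;=\; \sum_{\ell,\ell'=1}^{\infty} (1+\lambda_\ell+\lambda_{\ell'})^{2s}|a_{\ell,\ell'}|^2
\;\leq\; \sum_{\ell,\ell'=1}^{\infty} \bigl(\tilde a^s_{\ell,\ell'}\bigr)^2 .
\end{equation*}

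Finally I would invoke the standard fact that a nonnegative $\ell^1$ sequence lies in $\ell^2$: from the hypothesis $M:=\sum_{\ell,\ell'}\tilde a^s_{\ell,\ell'}<\infty$ one has $\tilde a^s_{\ell,\ell'}\leq M$ for every pair $(\ell,\ell')$, so $(\tilde a^s_{\ell,\ell'})^2 \leq M\,\tilde a^s_{\ell,\ell'}$ and hence $\sum(\tilde a^s_{\ell,\ell'})^2 \leq M^2<\infty$. Combined with the previous display this proves $K\in H^{2s}(\M\times\M)$. There is no genuine obstacle; the only thing to be a little careful about is that the weight appearing in the Sobolev norm is the \emph{sum} $1+\lambda_\ell+\lambda_{\ell'}$ (coming from eigenvalues of the Laplacian on the product manifold) while the hypothesis is phrased in terms of the \emph{product} weight $(1+\lambda_\ell)^s(1+\lambda_{\ell'})^s$, and the inequality above bridges these two.
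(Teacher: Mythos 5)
Your proof is correct and follows essentially the same route as the paper: the key inequality $1+\lambda_\ell+\lambda_{\ell'}\leq(1+\lambda_\ell)(1+\lambda_{\ell'})$ to pass from the product weight to the sum weight, followed by the embedding $\ell^1\hookrightarrow\ell^2$ to turn absolute summability of $\tilde a^s$ into square summability of the Sobolev-weighted coefficients. The only (immaterial) difference is the order of the two steps, and your write-up is in fact more explicit than the paper's about why the $\ell^1$ bound yields the $\ell^2$ bound.
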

\begin{proof}From \eqref{eq:Abssumprop} we deduce
\begin{align*}
\infty&>\sum_{\ell,\ell'=0}^\infty \vert (1+\lambda_{\ell})^s a_{\ell,\ell'}(1+\lambda_{\ell'})^s\vert\\
&\geq  \sum_{\ell,\ell'=0}^\infty \vert (1+\lambda_{\ell}+\lambda_{\ell'})^s a_{\ell,\ell'}\vert.
\end{align*}
From which 
\[\sum_{\ell,\ell'=0}^\infty  (1+\lambda_{\ell}+\lambda_{\ell'})^{2s} \vert a_{\ell,\ell'}\vert^2\leq \infty\]
follows, proving $K\in H^{2s} (\M\times \M)$.
\end{proof}

\begin{lemma}\label{Lemma:UniqueRep}
Let $K$ be a kernel of the form \eqref{eq:Kernform}. If there exists an $s\geq d/4$ with $\sum_{\ell,\ell'=1}^\infty \vert \tilde{a}^s_{\ell,\ell'}\vert <\infty$, 
then, for each combination of  $\ell,\ell'\in \N,$ the coefficients $a_{\ell,\ell'}$ can be uniquely determined through
\begin{equation}\label{eq:FouriercoefKer}
a_{\ell,\ell'}=\int_{\M}\int_{\M}K\left(\xi,\zeta\right)\overline{f_{\ell}}\left(\xi\right) {f_{\ell'}}\left(\zeta\right)\, d\mu\left(\xi\right)\,d\mu\left(\zeta\right).
\end{equation}
\end{lemma}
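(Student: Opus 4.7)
The plan is to verify that the hypothesis $\sum_{\ell,\ell'=1}^{\infty}|\tilde a^s_{\ell,\ell'}|<\infty$ with $s\geq d/4$ is exactly strong enough to turn \eqref{eq:Kernform} into an absolutely and uniformly convergent expansion, after which \eqref{eq:FouriercoefKer} follows from the orthonormality of the $f_\ell$ by a Fubini--type interchange of sum and integral.

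First I would combine the pointwise eigenfunction estimate \eqref{eq:estabdeigenf}, namely $|f_\ell(\xi)|\leq C\lambda_\ell^{d/4}$ for all $\ell\geq\ell^*$, with the fact that the finitely many remaining $f_\ell$ are continuous on the compact manifold $\M$ and hence uniformly bounded. This yields a constant $C'>0$ such that $|f_\ell(\xi)|\leq C'(1+\lambda_\ell)^{d/4}$ for every $\ell\in\N$ and every $\xi\in\M$. Using $s\geq d/4$, I then get the pointwise majorant
\begin{equation*}
\bigl|a_{\ell,\ell'}f_\ell(\xi)\overline{f_{\ell'}(\zeta)}\bigr|
\leq (C')^2\,(1+\lambda_\ell)^{d/4}(1+\lambda_{\ell'})^{d/4}|a_{\ell,\ell'}|
\leq (C')^2\,\tilde a^s_{\ell,\ell'},
\end{equation*}
uniformly in $(\xi,\zeta)\in\M\times\M$. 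By hypothesis the right-hand side is summable, so the Weierstrass M--test gives absolute and uniform convergence of the double series defining $K$ on $\M\times\M$.

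With uniform convergence in hand, $K$ is continuous, and the partial sums $K_N(\xi,\zeta)=\sum_{\ell,\ell'\leq N}a_{\ell,\ell'}f_\ell(\xi)\overline{f_{\ell'}(\zeta)}$ converge to $K$ uniformly, hence in $L^1(\M\times\M)$ since $\M$ is compact and has finite volume. Therefore, for any fixed indices $m,n$,
\begin{equation*}
\int_{\M}\int_{\M}K(\xi,\zeta)\overline{f_m(\xi)}f_n(\zeta)\,d\mu(\xi)\,d\mu(\zeta)
=\lim_{N\to\infty}\sum_{\ell,\ell'=1}^{N}a_{\ell,\ell'}\int_{\M}f_\ell(\xi)\overline{f_m(\xi)}\,d\mu(\xi)\int_{\M}\overline{f_{\ell'}(\zeta)}f_n(\zeta)\,d\mu(\zeta).
\end{equation*}
The orthonormality of $\{f_\ell\}$ in $L^2(\M)$ collapses the double sum to the single term $\ell=m$, $\ell'=n$, yielding $a_{m,n}$. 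Relabelling gives \eqref{eq:FouriercoefKer}, and uniqueness is immediate because the formula determines each coefficient from $K$ alone.

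The only genuine subtlety is the first step, namely that the pointwise bound \eqref{eq:estabdeigenf} is only asserted for $\ell\geq\ell^*$; I would address this by noting that the remaining $f_\ell$ form a finite family of continuous functions on the compact manifold $\M$, so absorbing their sup--norms into the constant poses no difficulty. Everything else is a standard Weierstrass M--test plus termwise integration argument.
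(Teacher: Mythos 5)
Your proof is correct and follows essentially the same route as the paper's (much terser) argument: insert the expansion \eqref{eq:Kernform} into \eqref{eq:FouriercoefKer}, use the eigenfunction bound \eqref{eq:estabdeigenf} together with $s\geq d/4$ to obtain a summable majorant, and exchange summation and integration before applying orthonormality. Your explicit handling of the finitely many indices $\ell<\ell^*$ and the Weierstrass M--test step simply fill in details the paper leaves implicit.
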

\begin{proof} The result follows by inserting the kernel representation \eqref{eq:Kernform} into  \eqref{eq:FouriercoefKer} and then using the estimate \eqref{eq:estabdeigenf} to exchange the order of summation and integration since it implies absolute summability.
\end{proof}

We note that a kernel as in the last lemma will always be continuous because of the smoothness of the basis functions and the absolute convergence of the series representation.

\begin{lemma}\label{LE:SubSemiPos}
Let $K: \M \times \M \rightarrow \C$ be a kernel satisfying \eqref{eq:Abssumprop}, for $s\geq d/4$ with $K(\xi,\zeta)=\overline{K(\zeta,\xi)}$. Then $K$ is positive definite if and only if the matrix $A_k:=(a_{\ell,\ell'})_{\ell,\ell'=1 }^k$ is positive semi-definite for all $k\in \N$.
\end{lemma}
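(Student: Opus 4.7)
The strategy is to apply \Cref{LE:CharPosDef}, which reduces positive definiteness of $K$ to the condition $\langle A(y),y\rangle\ge 0$ for all $y\in Y$, and then to connect this condition to the finite principal sections $A_k$ via truncation on one side and the Fourier representation of $a_{\ell,\ell'}$ on the other. Note that the hypothesis \eqref{eq:Abssumprop} already forces $K$ to be continuous, so \Cref{LE:CharPosDef} applies.

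For the ``if'' direction I would fix an arbitrary $y\in Y$ of the form $y_\ell=\sum_{\xi\in\Xi}c_\xi f_\ell(\xi)$ and consider the partial sums
$$S_k:=\sum_{\ell,\ell'=1}^{k}a_{\ell,\ell'}\,y_\ell\,\overline{y_{\ell'}}.$$
By the assumption that each $A_k$ is positive semi-definite, $S_k\ge 0$. The eigenfunction estimate \eqref{eq:estabdeigenf} gives $|y_\ell|\le C_\Xi(1+\lambda_\ell)^{d/4}$ uniformly in $\ell$, so combined with $s\ge d/4$ the series $\sum_{\ell,\ell'}a_{\ell,\ell'}y_\ell\overline{y_{\ell'}}$ is dominated by $C_\Xi^2\sum_{\ell,\ell'}|\tilde a^{\,s}_{\ell,\ell'}|<\infty$. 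Hence $S_k$ converges absolutely to $\langle A(y),y\rangle=\sum_{\xi,\zeta}c_\xi\overline{c_\zeta}K(\xi,\zeta)\ge 0$.

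For the ``only if'' direction I would use the integral representation \eqref{eq:FouriercoefKer} from \Cref{Lemma:UniqueRep}. For any $z\in\C^k$, substituting this expression into the quadratic form and applying Fubini (justified, again, by absolute summability) yields
$$\sum_{\ell,\ell'=1}^{k}a_{\ell,\ell'}z_\ell\overline{z_{\ell'}}=\int_{\M}\int_{\M}K(\xi,\zeta)\,\phi(\xi)\,\overline{\phi(\zeta)}\,d\mu(\xi)\,d\mu(\zeta),$$
where $\phi(\xi):=\sum_{\ell=1}^{k}z_\ell\overline{f_\ell(\xi)}$ is continuous on $\M$. It then suffices to show that this double integral is nonnegative for every continuous positive definite kernel on the compact manifold. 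This is done by a Riemann-sum argument: partition $\M$ into small Borel pieces $\Omega_i$ with distinct sample points $\xi_i\in\Omega_i$, observe that the approximating sums $\sum_{i,j}c_i\overline{c_j}K(\xi_i,\xi_j)$ with $c_i:=\phi(\xi_i)\mu(\Omega_i)$ are nonnegative by \Cref{DF:SPD}, and pass to the limit using uniform continuity of $K$ and $\phi$ on the compact product manifold.

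The main obstacle is this last Riemann-sum step: one must verify that the approximation goes through with genuinely distinct sample points (which is straightforward, since $K$ is continuous and the manifold has no isolated points, so points can be perturbed slightly without affecting limits). Everything else is essentially bookkeeping, with the absolute summability \eqref{eq:Abssumprop} doing the heavy lifting of legitimising every interchange of sum and integral.
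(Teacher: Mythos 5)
Your proof is correct, and the forward (``if'') direction coincides with the paper's: both write the quadratic form as the limit of the partial sums $S_k\ge 0$, with you supplying the (welcome) extra detail that \eqref{eq:estabdeigenf} and $s\ge d/4$ make the double series absolutely convergent. The backward (``only if'') direction is where you genuinely diverge. The paper disposes of it in one line by invoking \Cref{COR:PosDefl2}: positive definiteness of $K$ forces $A$ to be a positive operator on $\ell^2(\N)$ (this is imported from Theorem 2.1 of Dyn--Narcowich--Ward), and a non-positive-semi-definite section $A_k$ would produce a finitely supported $z\in\ell^2(\N)$ with $\langle Az,z\rangle<0$. You instead recover $a_{\ell,\ell'}$ via \eqref{eq:FouriercoefKer} (legitimate here, since the hypotheses of \Cref{Lemma:UniqueRep} are exactly those of the present lemma), rewrite $\sum_{\ell,\ell'\le k}a_{\ell,\ell'}z_\ell\overline{z_{\ell'}}$ as the integral quadratic form $\int\!\!\int K\,\phi\,\overline{\phi}\,d\mu\,d\mu$, and prove the classical fact that a continuous pointwise positive definite kernel on a compact space is integrally positive definite by Riemann sums with distinct sample points. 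Your route is longer but self-contained and elementary, avoids the external operator-theoretic result, and makes visible exactly where $s\ge d/4$ is used; the paper's route is shorter but rests entirely on the cited corollary. Both are sound.
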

\begin{proof}
First assume $A_k$ is positive definite for all $k$ and $K$ is as above. For any set of distinct points $\Xi \subset \M$ and any $c\in \C^{\vert \Xi \vert}$, and $y$ as in the proof of \Cref{THM:SPDCHar},
\begin{align*}
\sum_{\xi \in \Xi} \sum_{\zeta \in \Xi}c_{\xi} \overline{c_{\zeta}} K(\xi, \zeta)
&=\sum_{\ell,\ell'=1}^{\infty}y_{\ell} a_{\ell, \ell'}\overline{y_{\ell'}}=\underset{k \rightarrow \infty}{\lim} \sum_{\ell,\ell'=1}^{k}y_{\ell} a_{\ell, \ell'}\overline{y_{\ell'}}\geq 0.
\end{align*}
The other direction follows by contradiction, assuming $A_k$ is not positive semi-definite contradicts the positive definiteness of the operator $A$ on $\ell^2$, which has been shown to be necessary for positive definiteness of the kernel in \Cref{COR:PosDefl2}.
\end{proof}

\begin{theorem}
Let $K$ be an Hermitian kernel of the form \eqref{eq:Kernform} satisfying \eqref{eq:Abssumprop} for $s\geq d/4$.
The kernel is strictly positive definite if  $\tilde{A}^{d/4}$ satisfies the uniform strict diagonal dominance property:
$$\sum_{\ell'\neq \ell} \left\vert \tilde{a}^{d/4}_{\ell,\ell'}\right\vert < \sigma \left\vert \tilde{a}^{d/4}_{\ell,\ell}\right\vert,\quad \forall \ell \in \N, $$
where $0< \sigma <1$ is independent of $\ell$. 
\end{theorem}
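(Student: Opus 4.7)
My plan is to reduce the statement, via Theorem \ref{THM:SPDCHar}, to verifying $\langle A(y),y\rangle>0$ for every $y\in Y\setminus\{0\}$, and then to establish this inequality through a weighted Gershgorin/AM-GM estimate that is tailored to the weighted matrix $\tilde{A}^{d/4}$ rather than to $A$ itself. Two preliminary observations make this reduction tractable. First, the strict form of the dominance hypothesis forces $\tilde{a}^{d/4}_{\ell,\ell}>0$ for every $\ell$, for otherwise a nonnegative quantity would be required to be strictly less than zero; consequently $a_{\ell,\ell}>0$ throughout. Second, for every $y=(\sum_{\xi\in\Xi}c_\xi f_\ell(\xi))_{\ell}\in Y$, the eigenfunction bound \eqref{eq:estabdeigenf} gives $|y_\ell|\le C_y(1+\lambda_\ell)^{d/4}$, so that by \eqref{eq:Abssumprop} both $\sum_\ell a_{\ell,\ell}|y_\ell|^2$ and $\sum_{\ell\neq\ell'}|a_{\ell,\ell'}||y_\ell||y_{\ell'}|$ converge absolutely, legitimising any rearrangement.

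The main step will be to split
\[\langle A(y),y\rangle=\sum_\ell a_{\ell,\ell}|y_\ell|^2+\sum_{\ell\neq\ell'}a_{\ell,\ell'}y_\ell\overline{y_{\ell'}}\]
and bound the off-diagonal part by AM-GM with the weights $w_\ell=(1+\lambda_\ell)^{-d/4}$:
\[|y_\ell||y_{\ell'}|\le\tfrac12\Bigl(\tfrac{(1+\lambda_{\ell'})^{d/4}}{(1+\lambda_\ell)^{d/4}}|y_\ell|^2+\tfrac{(1+\lambda_\ell)^{d/4}}{(1+\lambda_{\ell'})^{d/4}}|y_{\ell'}|^2\Bigr).\]
Multiplying by $|a_{\ell,\ell'}|$, summing over $\ell\neq\ell'$, and symmetrising (using Hermiticity $|a_{\ell,\ell'}|=|a_{\ell',\ell}|$), the row sum that appears is exactly
\[(1+\lambda_\ell)^{-d/4}\sum_{\ell'\neq\ell}\tilde{a}^{d/4}_{\ell,\ell'}<\sigma\,(1+\lambda_\ell)^{-d/4}\tilde{a}^{d/4}_{\ell,\ell}=\sigma\,a_{\ell,\ell}(1+\lambda_\ell)^{d/4},\]
by the dominance hypothesis. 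Plugging this back produces the clean bound $\sum_{\ell\neq\ell'}|a_{\ell,\ell'}||y_\ell||y_{\ell'}|\le\sigma\sum_\ell a_{\ell,\ell}|y_\ell|^2$, and hence
\[\langle A(y),y\rangle\ge(1-\sigma)\sum_\ell a_{\ell,\ell}|y_\ell|^2.\]

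To close the argument, I invoke the coordinate-nondegeneracy observation from the proof of Theorem \ref{THM:SPDCHar}: if $y\in Y\setminus\{0\}$ then at least one component $y_{\ell^\star}$ is nonzero, and combined with $a_{\ell^\star,\ell^\star}>0$ this yields $\langle A(y),y\rangle>0$, as required. The main obstacle I anticipate is the bookkeeping of weights in the AM-GM step: the powers $(1+\lambda_\ell)^{\pm d/4}$ must be chosen so that after multiplication by $|a_{\ell,\ell'}|$ the off-diagonal row sum matches the hypothesis on $\tilde{A}^{d/4}$ verbatim. The choice $w_\ell=(1+\lambda_\ell)^{-d/4}$ is exactly what makes every $(1+\lambda_\ell)$-factor cancel in the right way, and the uniform constant $\sigma<1$ is what then survives to give strict positivity.
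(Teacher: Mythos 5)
Your proof is correct in substance but takes a genuinely different route from the paper. The paper factorises $\tilde{A}^{d/4}=LL^*$ via an infinite Cholesky decomposition, rewrites the quadratic form as $\lV L\mathbf{y}\rV_2^2$, and then invokes the Shivakumar--Chew nonsingularity theorem for diagonally dominant infinite matrices on $\ell^{\infty}(\N)$ to rule out a nontrivial kernel. You instead run a direct weighted Gershgorin/AM--GM estimate on $\langle A(y),y\rangle$ itself, obtaining the quantitative lower bound $(1-\sigma)\sum_{\ell}a_{\ell,\ell}\vert y_{\ell}\vert^2$. Your route is more elementary and self-contained: it avoids justifying the existence and properties of an infinite Cholesky factor, avoids the passage between kernels of $L^*$ and of $\tilde{A}^{d/4}$, and handles the rearrangement of the double series cleanly via the eigenfunction bound \eqref{eq:estabdeigenf} together with \eqref{eq:Abssumprop}. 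It also makes transparent exactly where the uniformity of $\sigma$ enters, namely as the surviving factor $1-\sigma>0$. What it does not give you (and what the paper's machinery is set up for) is reusability of the Cholesky identity in the companion theorem on plain strict diagonal dominance for larger $s$; but for this statement your argument is arguably the cleaner one.

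Two caveats. First, a bookkeeping slip: since $\tilde{a}^{d/4}_{\ell,\ell'}=(1+\lambda_{\ell})^{d/4}\vert a_{\ell,\ell'}\vert(1+\lambda_{\ell'})^{d/4}$, the symmetrised off-diagonal row sum is $(1+\lambda_{\ell})^{-d/2}\sum_{\ell'\neq\ell}\tilde{a}^{d/4}_{\ell,\ell'}<\sigma(1+\lambda_{\ell})^{-d/2}\tilde{a}^{d/4}_{\ell,\ell}=\sigma\vert a_{\ell,\ell}\vert$; your displayed chain uses the exponent $-d/4$ and ends at $\sigma a_{\ell,\ell}(1+\lambda_{\ell})^{d/4}$, which is inconsistent with the clean bound you then state. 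Only the $-d/2$ version delivers $\sum_{\ell\neq\ell'}\vert a_{\ell,\ell'}\vert\vert y_{\ell}\vert\vert y_{\ell'}\vert\leq\sigma\sum_{\ell}\vert a_{\ell,\ell}\vert\vert y_{\ell}\vert^2$, so the exponents should be fixed. Second, $\tilde{a}^{d/4}_{\ell,\ell}>0$ only forces $a_{\ell,\ell}\neq 0$, not $a_{\ell,\ell}>0$, because $\tilde{A}^{d/4}$ is built from $\vert a_{\ell,\ell'}\vert$; a negative real diagonal coefficient would break the final positivity step. This is not a defect relative to the paper --- its own proof tacitly identifies the quadratic form of $A$ with that of $\tilde{A}^{d/4}$, which likewise requires the coefficients' signs to cooperate --- but if you want the argument airtight you should add $a_{\ell,\ell}>0$ (or positive definiteness of $K$) as an explicit hypothesis rather than deriving it from the dominance condition.
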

\begin{proof}
We find that all submatrices of $\tilde{A}^{d/4}$  are strictly diagonally dominant with non-zero diagonal entries and thereby positive definite. We can define the Cholesky decomposition of $\tilde{A}^{d/4}$ as limit of the finite Cholesky decompositions of the submatrices. We denote by $\tilde{A}^{d/4}=LL^*$ the infinite Cholesky decomposition matrix which exists since $\tilde{A}^{d/4}$ is positive definite and bounded on $\ell^{\infty}(\N)$ as a consequence of \eqref{eq:Abssumprop}, $L^*$ is the complex conjugate of $L$ and transposed.

Inserting the Cholesky-decomposition into the quadratic form 
\begin{align}\label{eq:CholeskyKern}
\sum_{\xi \in \Xi} \sum_{\zeta \in \Xi}c_{\xi} \overline{c_{\zeta}} K\left(\xi, \zeta\right)
&=\lV L\mathbf{y}\rV_2^2,
\end{align}
where $\tilde{\mathbf{y}}=\left(\frac{1}{(1+\lambda_{\ell})^{d/4}}y_{\ell}\right)_{\ell=1}^{\infty}$ is a bounded sequence as a result of the estimate of the  eigenfunctions of the Laplace Beltrami operator in \eqref{eq:estabdeigenf} and we can rearrange the sum because of the absolute summability.

Assume there exists an element of the space of bounded sequences $\ell^{\infty}(\N)$ for which $L^*x=0$, then $L\left(L^*x\right)=\tilde{A}^{d/4}x=0$, where we can exchange the order of multiplication because of the triangular structure of $L$ and because $L^*x$ is element wise finite for all bounded $x$, as a result of $\lV L^*x \rV^2 =\overline{x}^T\tilde{A}^{d/4}x <\infty$ which again is a consequence of \eqref{eq:Abssumprop}.

Now it is sufficient to prove that there exists no eigenvector in the space of bounded sequences for which $\tilde{A}^{d/4}x=0.$
Therefore we can employ the results for non singularity of infinite matrices given in \citep{Shivakumar1987} Theorem 1b. We briefly review their arguments for the convenience of the reader.

From the strict diagonal dominance we can deduce that $\tilde{a}^{d/4}_{\ell,\ell}$ is non zero for all $\ell\in\N$. Assume there exists $x\in\ell^{\infty}(\N)$ with
$\tilde{A}^{d/4}x=0$ and $\lV x\rV_{\infty}=1$. Thereby 
\begin{align*} \sum_{\ell'=1}^{\infty} \tilde{a}^{d/4}_{\ell,\ell'}x_{\ell'}&= 0,\quad  \forall \ell \in \N \ 
\Leftrightarrow \ \sum_{\ell'\neq \ell}^{\infty} \tilde{a}^{d/4}_{\ell,\ell'}x_{\ell'}= -\tilde{a}^{d/4}_{\ell,\ell}x_{\ell}, \quad \forall \ell \in \N.
\end{align*}
Since $x$ has $\ell^{\infty}$ norm one, there exists a value $\ell\in \N$, for which $\vert x_{\ell} \vert>\sigma$ and
$$\sum_{\ell'\neq \ell}^{\infty}\left\vert  \tilde{a}^{d/4}_{\ell,\ell'}\right\vert \geq  \sum_{\ell'\neq \ell}^{\infty}\left \vert  \tilde{a}^{d/4}_{\ell,\ell'} x_{\ell'}\right \vert \geq  \left\vert \tilde{a}^{d/4}_{\ell,\ell}\right \vert \sigma.$$
This contradicts the assumption and the only possible choice for which \eqref{eq:CholeskyKern} is equal to zero is $\tilde{\textbf{y}}=0$. From the definition of this sequence and the independence of the basis functions we know that this is only possible if $c_{\xi}=0$ for all $\xi \in \Xi$.
\end{proof}
The above condition on the matrix is denoted as strict uniform diagonal dominance and it is a stronger assumption than diagonal dominance. In the next theorem we show that diagonal dominance of $\tilde{A}^s$ is sufficient if we add a stronger assumption on $s$.
\begin{theorem} Let $s\geq d/4$ be such that  $\sum_{\ell=0}(1+\lambda_{\ell})^{-2s}\vert f_{\ell}(\xi)\vert^2< \infty$ and the Hermitian kernel of the form \eqref{eq:Kernform} satisfies
$$\sum_{\ell,\ell'=0}^{\infty} \left\vert \tilde{a}^{s}_{\ell,\ell'} \right\vert<\infty,$$
then it is sufficient for the strict positive definiteness of the kernel that $\tilde{A}^s$ is strictly diagonally dominant.
\end{theorem}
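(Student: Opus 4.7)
The plan is to follow the template of the previous theorem's proof, weakening its $\ell^{\infty}(\N)$-argument to an $\ell^{2}(\N)$-argument that the strengthened hypothesis makes available. First I would repeat the Cholesky setup: strict diagonal dominance of $\tilde{A}^{s}$ together with the absolute summability of its entries forces every finite principal submatrix to be positive definite (Gershgorin applied to a Hermitian matrix), so an infinite factorisation $\tilde{A}^{s}=LL^{*}$ is obtained as the limit of the finite Cholesky factorisations. For a finite set $\Xi\subset\M$ and coefficients $c\in\C^{\vert\Xi\vert}$, I set $y_{\ell}:=\sum_{\xi\in\Xi}c_{\xi}f_{\ell}(\xi)$ and $\tilde{y}_{\ell}:=(1+\lambda_{\ell})^{-s}y_{\ell}$, and absorb the weights $(1+\lambda_{\ell})^{s}$ into $\tilde{\mathbf{y}}$ exactly as in the proof of the previous theorem to obtain
$$\sum_{\xi,\zeta\in\Xi}c_{\xi}\overline{c_{\zeta}}K(\xi,\zeta)=\lV L^{*}\tilde{\mathbf{y}}\rV_{2}^{2}.$$
It then suffices to show $L^{*}\tilde{\mathbf{y}}\neq 0$ whenever $c\neq 0$.

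The crucial new step is to prove that under the strengthened hypothesis on $s$ the sequence $\tilde{\mathbf{y}}$ lies in $\ell^{2}(\N)$ rather than merely $\ell^{\infty}(\N)$. By the Cauchy--Schwarz inequality applied to $y_{\ell}=\sum_{\xi}c_{\xi}f_{\ell}(\xi)$,
$$\sum_{\ell=1}^{\infty}(1+\lambda_{\ell})^{-2s}\vert y_{\ell}\vert^{2}\leq \lV c\rV_{2}^{2}\sum_{\xi\in\Xi}\sum_{\ell=1}^{\infty}(1+\lambda_{\ell})^{-2s}\vert f_{\ell}(\xi)\vert^{2}<\infty,$$
where finiteness of the inner sum is exactly the hypothesis on $s$. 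Hence $\vert\tilde{y}_{\ell}\vert\to 0$ and, whenever $\tilde{\mathbf{y}}\neq 0$, the supremum $M:=\sup_{\ell}\vert\tilde{y}_{\ell}\vert$ is attained at some index $\ell_{0}$. Assuming for contradiction $L^{*}\tilde{\mathbf{y}}=0$, the triangular structure of $L$ together with \eqref{eq:Abssumprop} (as in the previous proof) gives $\tilde{A}^{s}\tilde{\mathbf{y}}=LL^{*}\tilde{\mathbf{y}}=0$, and row $\ell_{0}$ of this equation yields
$$\tilde{a}^{s}_{\ell_{0},\ell_{0}}M=\tilde{a}^{s}_{\ell_{0},\ell_{0}}\vert\tilde{y}_{\ell_{0}}\vert\leq\sum_{\ell'\neq\ell_{0}}\tilde{a}^{s}_{\ell_{0},\ell'}\vert\tilde{y}_{\ell'}\vert\leq M\sum_{\ell'\neq\ell_{0}}\tilde{a}^{s}_{\ell_{0},\ell'},$$
contradicting the strict diagonal dominance at row $\ell_{0}$. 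Hence $\tilde{\mathbf{y}}=0$, so $y_{\ell}=0$ for every $\ell$, and the linear-independence argument from the proof of \Cref{THM:SPDCHar} forces $c=0$.

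The main obstacle, and also the content of the improvement over the previous theorem, is the transition from working in $\ell^{\infty}(\N)$ to working in $\ell^{2}(\N)$. Uniform strict diagonal dominance was indispensable before only because a non-zero sequence in $\ell^{\infty}(\N)$ need not attain its supremum, so that the row-wise contradiction had to be given a uniform margin $\sigma<1$. The strengthened eigenfunction-summability hypothesis is exactly what delivers $\tilde{\mathbf{y}}\in\ell^{2}(\N)$, where the supremum is automatically attained, and thereby makes pointwise (non-uniform) strict diagonal dominance sufficient.
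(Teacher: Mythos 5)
Your proof is correct and follows essentially the same route as the paper: the same Cholesky factorisation $\tilde{A}^{s}=LL^{*}$, the same reduction to showing $\tilde{A}^{s}$ annihilates no nonzero sequence in $\ell^{2}(\N)$, and the same maximal-entry contradiction with (non-uniform) strict diagonal dominance, which the paper imports from Farid's Theorem 5.1 for null sequences. Your explicit Cauchy--Schwarz verification that $\tilde{\mathbf{y}}\in\ell^{2}(\N)$ is a detail the paper merely asserts, so nothing further is needed.
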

\begin{proof}
As in the proof of the last theorem we denote by $\tilde{A}^{s}=LL^*$ the infinite Cholesky decomposition matrix which exists since  the submatrices  $\tilde{A}^{s}_k$ are positive definite and  the matrix is bounded on $\ell^{\infty}(\N)$ as a consequence of \eqref{eq:Abssumprop} and $s\geq d/4$.

Again inserting the Cholesky-decomposition into the quadratic form yields
\begin{align}
\sum_{\xi \in \Xi} \sum_{\zeta \in \Xi}c_{\xi} \overline{c_{\zeta}} K\left(\xi, \zeta\right)
&=\lV L{\mathbf{y}}\rV_2^2,
\end{align}
where $\tilde{\mathbf{y}}=\left(\frac{1}{(1+\lambda_{\ell})^{s}}y_{\ell}\right)_{\ell=1}^{\infty}$ is now an $\ell^2(\N)$ sequence because of the condition
 on $s$. With the same arguments as in the last proof, we find that if for some $x\in \ell^2(\N)$  $L^*x=0$, then $L\left(L^*x\right)=\tilde{A}^{s}x=0$.

Thereby it is sufficient to show that there exists no eigenvector in the space $\ell^2(\N)$ for which $\tilde{A}^{s}x=0.$ 

The result now follows by applying, \citep{Farid1995} Theorem 5.1. We include the adapted proof for our setting for completeness.
Suppose that $0$ is an eigenvalue of $\tilde{A}^s$ relative to the space of null sequences, which includes $\ell^2(\N)$, then there exists a null sequence $x$ not equal to zero for which 
$\tilde{A}^{s}x=0$.
Since $\vert x_i\vert\rightarrow 0$ as $i\rightarrow \infty$ the set $\lbrace \vert x_i\vert\ : \ i\in \N\rbrace$ has a maximum, let $k$ be an integer where $\vert x_k\vert$ is maximal.
We find that 
$$ \vert \tilde{a}^{s}_{k,k}\vert \vert x_k\vert \leq \sum_{\ell \neq k}\vert \tilde{a}^s_{k,\ell}\vert \vert x_{\ell}\vert \leq \sum_{\ell \neq k} \vert \tilde{a}^s_{k,\ell}\vert \vert x_k\vert.$$
Implying $$\vert \tilde{a}^{s}_{k,k}\vert\leq\sum_{\ell \neq k} \vert \tilde{a}^s_{k,\ell}\vert,$$
which contradicts the strict diagonal dominance.

\end{proof}


\subsection{Kernels of convolutional form}

For a special class of kernels we can simplify the condition of strict positive definiteness. This class was also studied in \citep{Dyn1997} and will allow for severe simplification.
Let $K$ be a kernel of the form 
\begin{equation}\label{eq:Conkerform}
K(\xi, \zeta)=\sum_{\ell=1}^{\infty}a_{\ell}f_{\ell}(\xi)\overline{f_{\ell}(\zeta)},\qquad \forall \xi,\zeta \in \M .
\end{equation}
We will refer to these kernels as kernels of convolutional form and define the set 
$$\mathcal{F}:=\left\lbrace \ell\in \N : a_{\ell}>0\right\rbrace.$$
\begin{theorem}\label{THMSerKerSPD}
A continuous kernel of the form \cref{eq:Conkerform} is positive definite if and only if $a_{\ell}\geq 0$ for all $\ell\in\N$. It is strictly positive definite if and only if it is positive definite and for any finite set of distinct points $\Xi$, $\sum_{\xi \in \Xi}c_{\xi} f_{\ell}(\xi)= 0$, for all $\ell \in \mathcal{F}$ implies $c_{\xi}=0$ for all $\xi \in \Xi$. In this case we say $\mathcal{F}$ induces strict positive definiteness on $\M$.
\end{theorem}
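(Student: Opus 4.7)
The plan is to reduce everything to the diagonal structure of the operator $A$ in the convolutional case. When $a_{\ell,\ell'} = a_\ell \delta_{\ell,\ell'}$, the operator $A$ from \eqref{eq:DefOp} becomes pointwise multiplication by the sequence $(a_\ell)_{\ell}$, so for any $y = (y_\ell)_\ell \in Y$ coming from a finite set $\Xi$ and scalars $(c_\xi)_{\xi\in\Xi}$ via $y_\ell = \sum_{\xi \in \Xi} c_\xi f_\ell(\xi)$, one has the clean identity
$$
\sum_{\xi,\zeta \in \Xi} c_\xi \overline{c_\zeta}\, K(\xi,\zeta) \;=\; \langle A(y), y\rangle \;=\; \sum_{\ell=1}^\infty a_\ell |y_\ell|^2.
$$
Every subsequent step exploits this identity, whose validity (and the exchange of summations underlying it) is already justified in the proof of \Cref{LE:CharPosDef} using continuity of $K$.

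For the positive definite part, sufficiency is immediate from the display: if every $a_\ell \geq 0$ then the right hand side is non-negative. For necessity I would invoke \Cref{COR:PosDefl2}: positive definiteness of $K$ is equivalent to $A$ being a positive operator on $\ell^2(\N)$. Testing against the canonical basis vector $e_{\ell_0}\in\ell^2(\N)$ gives $a_{\ell_0} = \langle A(e_{\ell_0}), e_{\ell_0}\rangle \geq 0$ for every $\ell_0$, completing this half.

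For strict positive definiteness I would treat both implications by contraposition. If $K$ is strictly positive definite but there exist a finite $\Xi$ and nonzero $(c_\xi)$ satisfying $\sum_{\xi \in \Xi} c_\xi f_\ell(\xi) = 0$ for every $\ell \in \mathcal{F}$, then in the identity above every surviving term vanishes (those with $\ell \in \mathcal{F}$ because $y_\ell = 0$, those with $\ell \notin \mathcal{F}$ because $a_\ell = 0$), contradicting strict positive definiteness. Conversely, if $K$ is positive definite and $\mathcal{F}$ has the stated separating property, then for any nonzero $(c_\xi)$ the quadratic form vanishes only if $a_\ell |y_\ell|^2 = 0$ for every $\ell$; this forces $y_\ell = 0$ for all $\ell \in \mathcal{F}$, and the hypothesis then forces $c = 0$, a contradiction.

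Most steps here are routine rearrangements of the displayed identity; the only conceptual hurdle is obtaining $a_\ell \geq 0$ in the absence of any summability hypothesis on the coefficients, and this is dispatched in one line by applying \Cref{COR:PosDefl2} to the standard basis of $\ell^2(\N)$. I do not foresee any other nontrivial technical obstacle, as the convolutional structure converts the abstract positivity condition on $A$ in \Cref{THM:SPDCHar} into a transparent per-$\ell$ condition on the $a_\ell$ together with a linear-independence condition on the evaluations $(f_\ell(\xi))_{\xi \in \Xi}$ for $\ell$ ranging over $\mathcal{F}$.
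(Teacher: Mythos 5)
Your proposal is correct and follows essentially the same route as the paper: the diagonal identity $\sum_{\xi,\zeta}c_\xi\overline{c_\zeta}K(\xi,\zeta)=\sum_\ell a_\ell|y_\ell|^2$ with $y_\ell=\sum_\xi c_\xi f_\ell(\xi)$, non-negativity of the summands, and the separating property of $\mathcal{F}$ to force $c=0$. Your use of \Cref{COR:PosDefl2} with the canonical basis vectors to get the necessity of $a_\ell\geq 0$ is a harmless (and slightly more explicit) variant of the paper's appeal to \Cref{LE:CharPosDef} and the result of Dyn, Narcowich and Ward.
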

\begin{proof}
The first part of the theorem follows directly from \Cref{LE:CharPosDef} and was already stated in \citep{Dyn1997}. 
We prove the second result by contradiction and assume that $K$ is continuous positive definite but not strictly positive definite.  If $K$ is not strictly positive definite there exists a nonempty set of distinct point $\Xi$ and coefficients $c_{\xi}$ not all zero with  
$$ \sum_{\xi,\zeta \in \Xi}c_{\xi} \overline{c_{\zeta}} K\left(\xi, \zeta\right)=0. $$
This is equivalent to 
\begin{equation}\label{eq:Formaxker3}
\sum_{\ell=1}^{\infty}a_{\ell} y_{\ell} \overline{y_{\ell}}=0,
\end{equation}
where $y_{\ell}=\sum_{\xi\in\Xi} c_{\xi} f_{\ell}\left(\xi\right)$ and the sums are interchangeable because of the continuity.

Since we know that all the summands are non negative since the $a_{\ell}$ are non-negative, the overall sum can only be zero if all summands are. For the indices $\ell\in \mathcal{F}$ this implies  $y_{\ell}=0$.

But according to the condition of the lemma this implies $c_\xi=0$ for all $\xi\in\Xi$, in contradiction to the assumption of the proof.

\end{proof}

\subsection{Structure on product manifolds}
For products of manifolds we can generalise a sufficient condition proven for  spheres in  Theorem 2.5, \citep{Guella2016b}.
We investigate strict positive definiteness on manifold of the product structure $\M \times \mathbb H$, where both $\M$ and $\mathbb H$ are connected, compact Riemannian manifolds. We will again focus on convolutional kernels.
We can now define an orthonormal basis of the eigenfunctions of the Laplace-Beltrami operator on the product manifold, according to \citep{Dyn1997}, this takes the form 
\begin{equation}\label{prodbasis}f_{\ell}(\xi) g_{j}(\zeta),\quad  (\xi,\zeta)\in \M \times \mathbb H,
\end{equation}
where $f_{\ell}$ is a basis as described in the last section for $L^2(\mathbb M)$ and $g_{j}$ is a similar orthonormal basis of $L^2(\mathbb H)$. 

These kernels of convolutional form can be represented in a series expansion of the form \eqref{eq:Conkerform}. To be specific such a continuous and Hermitian kernel has a representation of the form
\begin{equation}\label{eq:ConkernProdform}
K((\xi,\zeta),(\xi',\zeta'))=\sum_{(\ell,j)\in \N^2}a_{(\ell,j)}f_{\ell}(\xi)g_{j}(\zeta)\overline{f_{\ell}(\zeta')}\overline{g_{j}(\xi')},
\end{equation}with $  a_{(\ell,j)}\in\R$ and $ (\xi,\zeta)$,  $(\xi',\zeta')\in \M\times \mathbb{H}$.
We will now derive sufficient conditions for the strict positive definiteness of such kernels.

\begin{lemma}\label{THMSerKerSPD}
A kernel of the form \eqref{eq:ConkernProdform} is positive definite if $a_{(\ell,j)}\geq 0$ for all $(\ell,j)\in\N^2$. It is strictly positive definite if  the set $$\mathcal{G}:=\lbrace \ell :\mathcal{G}_{\ell} \text{ induces strict  positive definiteness on } \mathbb H \rbrace$$ induces strict positive definiteness on $\M$, with  $\mathcal{G}_{\ell}=\lbrace j\in \N : (\ell,j)\in \mathcal{F}\rbrace$.
\end{lemma}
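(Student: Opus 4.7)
The plan is to interpret \eqref{eq:ConkernProdform} as a convolutional kernel on $\M\times\mathbb H$ with respect to the product orthonormal basis $(\xi,\zeta)\mapsto f_{\ell}(\xi)g_{j}(\zeta)$ of $L^{2}(\M\times\mathbb H)$, and then invoke the convolutional SPD criterion from the preceding subsection. The positive definiteness part is immediate, since $a_{(\ell,j)}\geq 0$ for every pair. For the strict positive definiteness part, by the same criterion it suffices to show that whenever $\Xi\subset\M\times\mathbb H$ is a finite set of distinct points and $\{c_{(\xi,\zeta)}\}\subset\C$ satisfy
\[ \sum_{(\xi,\zeta)\in\Xi} c_{(\xi,\zeta)}\,f_{\ell}(\xi)g_{j}(\zeta)=0 \qquad \forall\,(\ell,j)\in\mathcal F, \]
all coefficients must vanish. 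I would prove this by a two-step decoupling, after introducing the notation $Z:=\{\zeta\in\mathbb H:(\xi,\zeta)\in\Xi\text{ for some }\xi\}$ and $X_\zeta:=\{\xi\in\M:(\xi,\zeta)\in\Xi\}$ for each $\zeta\in Z$, both being finite sets of distinct points of their respective manifolds.

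The first step fixes $\ell_{0}\in\mathcal G$. Restricting the displayed identity to those $(\ell_{0},j)\in\mathcal F$, i.e.\ to $j\in\mathcal G_{\ell_{0}}$, and regrouping the double sum by its $\mathbb H$-coordinate gives
\[ \sum_{\zeta\in Z}\Bigl(\sum_{\xi\in X_\zeta} f_{\ell_{0}}(\xi)\,c_{(\xi,\zeta)}\Bigr) g_{j}(\zeta)=0 \qquad \forall\,j\in\mathcal G_{\ell_{0}}. \]
Because $\ell_{0}\in\mathcal G$ means that $\mathcal G_{\ell_{0}}$ induces strict positive definiteness on $\mathbb H$, the inner coefficient $\sum_{\xi\in X_\zeta} f_{\ell_{0}}(\xi)c_{(\xi,\zeta)}$ vanishes for every $\zeta\in Z$. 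Since $\ell_{0}$ was arbitrary in $\mathcal G$, the second step fixes $\zeta\in Z$ and reads these identities as $\sum_{\xi\in X_\zeta} f_{\ell_{0}}(\xi)c_{(\xi,\zeta)}=0$ for every $\ell_{0}\in\mathcal G$; the hypothesis that $\mathcal G$ itself induces strict positive definiteness on $\M$ then forces $c_{(\xi,\zeta)}=0$ for all $\xi\in X_\zeta$. Letting $\zeta$ range over $Z$ gives $c\equiv 0$ on $\Xi$.

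The only delicate point is the regrouping in the first step: the same $\zeta$ typically appears paired with several different $\xi$'s in $\Xi$ and vice versa, so one has to first collapse the double indexing of $\Xi$ along its $\mathbb H$-coordinates into the distinct set $Z$ before one is entitled to invoke the SPD property of $\mathcal G_{\ell_{0}}$ on $\mathbb H$, and then transpose the roles of $\M$ and $\mathbb H$ in order to apply the SPD property of $\mathcal G$ on $\M$. Apart from this bookkeeping the proof is a purely formal nested application of the convolutional SPD criterion and involves no analytic subtlety.
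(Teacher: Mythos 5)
Your proposal is correct and follows essentially the same route as the paper's own proof: the same regrouping of $\Xi$ into the distinct $\mathbb H$-coordinates ($Z=\Xi^1$) and fibres ($X_\zeta=\Xi_\zeta$), followed by the same two-step nested application of the convolutional SPD criterion (first $\mathcal G_{\ell_0}$ on $\mathbb H$, then $\mathcal G$ on $\M$). The bookkeeping point you flag about collapsing repeated coordinates before invoking the SPD property is exactly the step the paper also makes explicit.
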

\begin{proof} The first part follows directly from \Cref{THMSerKerSPD}.
Now assume that $$\mathcal{G}:=\lbrace \ell :\mathcal{G}_{\ell} \text{ induces strict  positive definiteness on } \mathbb H \rbrace$$ induces strict positive definiteness on $\M$. According to \Cref{THMSerKerSPD} adapted to the product setting, we prove that for any finite subset of distinct data sites in $\mathbb M \times \mathbb H$, denoted by $\Xi$
assuming 
\begin{equation}
\sum_{(\xi,\zeta)\in\Xi}c_{(\xi,\zeta)}f_{\ell}(\xi)g_{j}(\zeta)=0,\quad \forall \ (\ell,j)\in \mathcal{F},
\end{equation}
will imply the coefficients to be all zero.
The last display is adapted by summing only over the distinct values of $\zeta$ first. Therefore we define $\Xi^1:\lbrace \zeta \in \mathbb H: \exists (\xi,\zeta)\in \Xi \rbrace$ and $\Xi_{\zeta}:=\lbrace \xi: (\xi,\zeta)\in \Xi\rbrace$  and find 
\begin{equation*}
\sum_{\zeta\in\Xi^1}\left( \sum_{\xi \in \Xi_{\zeta}}c_{(\xi,\zeta)}f_{\ell}(\xi)\right)g_j(\zeta)=0,\quad \forall \ (\ell,j)\in \mathcal{F}.
\end{equation*}
The outer sum needs to be zero especially for all pairs $(\ell,j)\in\mathcal F$ with $\ell \in \mathcal{G}$. Fixing $\ell \in \mathcal{G}$ implies that $\mathcal{G}_{\ell}$ induces strict positive definiteness on $\mathbb H$ and since all $\zeta \in \Xi^1$ are distinct
\begin{equation*}
\sum_{\zeta\in\Xi^1}\left( \sum_{\xi \in \Xi_{\zeta}}c_{(\xi,\zeta)}f_{\ell}(\xi)\right)g_{j}(\zeta)=0,\quad \forall j\in \mathcal{G}_{\ell},
\end{equation*}
implies 
\begin{equation*}
 \sum_{\xi \in \Xi_{\zeta}}c_{(\xi,\zeta)}f_{\ell}(\xi)=0,\quad  \forall \ell \in \mathcal{G}.
 \end{equation*}
 The last equation implies $c_{(\xi,\zeta)}=0$ for all $(\xi,\zeta)\in \Xi$ since all $\xi \in \Xi_{\zeta}$ are distinct and $\mathcal G$ induces strict positive definiteness on $\mathbb M$. 
\end{proof}
\section{Strict positive definiteness on homogeneous manifolds}
 We add the additional assumption of the manifold being homogeneous and discuss the resulting changes to the general case. We denote by $G$ the group of isometries of the manifold, which acts transitive on it since the manifold is homogeneous. 
 
The eigenfunctions of the Laplacian in this setting were discussed in \citep{Evarist1975} and we summarise the additional properties using the notation of eigenfunctions and eigenvalues from the last section.
 
 From the invariance under isometries of the Laplacian we know that for any $g\in G$:
 $$ \triangle f_{\ell}(g(\xi))=\lambda_{\ell}f_{\ell}(g(\xi))$$
and $f_{\ell}\cdot g$ is an eigenfunction corresponding to  the same eigenvalue as $f_{\ell}$.

We cite the addition formula for the eigenfunctions from \citep{Evarist1975} (Lemma 3.1 and Theorem 3.2).
From now on we use an index $k$ to enumerate the distinct eigenvalues of the Laplacian, $0=\lambda_1<\lambda_2\ldots$.
For any $\xi_0\in \M$ and eigenspace $H_k$ of $\triangle$ of dimension $m_k$ corresponding to the eigenvalue $\lambda_k$,  there exists a unique function $f^k_{\xi_0}$ which satisfies
\begin{enumerate}
\item it is zonal with respect to $\xi_0$, 
\item $f^k_{\xi_0}(\xi_0)=m_k^{1/2}$,  
\item $f^k_{\xi_0}$ is orthogonal to any function in $H_k$ vanishing at $\xi_0$,
\item $\int_{\M}(f^k_{\xi_0}(\zeta))^2\,d\mu(\zeta)=1$. 
\end{enumerate}

Further for any orthonormal basis of $H_k$, $f_{1,k},\ldots, ,f_{m_k,k}$, and   for every $\xi,\zeta \in \M$,
\begin{equation}\label{eq:AdditionTheorem}
\sum_{j=1}^{m_k}f_{j,k}(\xi)\overline{f_{j,k}(\zeta)}=m_k^{1/2}f^k_{\zeta}(\xi)=m_k^{1/2}f^k_{\xi}(\zeta).
\end{equation}

We adapt the series representation \eqref{eq:Conkerform} of the prior section in the following way:

\begin{equation}\label{eqKernHomConv}
K(\xi,\zeta)=\sum_{k=0}^{\infty}\sum_{j=1}^{m_k}d_{j,k}f_{j,k}(\xi)\overline{f_{j,k}(\zeta)},\quad  d_{j,k}\in\R,
\end{equation}
where $f_{j,k}$, $j=1,\ldots,m_k$ is an orthonormal basis of $H_k$.

We summarise a characterisation of strict positive definitneness of such kernels analogue to the cases derived for spheres.
For a positive definite kernel of the form \eqref{eqKernHomConv} we define 
	$$\mathcal{F}:=\lbrace (j,k) :\ d_{j,k}>0\rbrace,\quad
	A_k:=\lbrace j: (j,k)\in \mathcal{F} \rbrace, \quad \mathcal{N}=\lbrace k: k\in \Z_+ \wedge \exists d_{j,k} \neq 0 \rbrace.$$
	
\begin{theorem}\label{LEEquivspdGinv}
		For a  continuous p.d. kernel as in  \eqref{eqKernHomConv} the following are equivalent:
		\begin{enumerate}
			\item $K$ is strictly positive definite on $\M$.
			\item For any finite set of distinct points $\Xi$, $\sum_{\xi \in \Xi}c_{\xi} f_{j,k}(\xi)= 0$, for all $(j,k) \in \mathcal{F}$ implies $c_{\xi}=0$ for all $\xi \in \Xi$.
			\item For any finite set of distinct points $\Xi\subset\M$, $$ \sum_{\xi\in \Xi }c_{\xi} \sum_{j\in A_k}f_{j,k}(\xi)\overline{f_{j,k}(\zeta)}=0,\ \forall k \in N,\zeta \in \M,\text{ implies }c_{\xi }=0,\ \forall \xi \in \Xi.$$
		\end{enumerate}
	\end{theorem}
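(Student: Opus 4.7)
The plan is to establish $(1)\Leftrightarrow(2)$ by specialising the convolutional-kernel criterion \Cref{THMSerKerSPD} to the doubly indexed representation \eqref{eqKernHomConv}, and then $(2)\Leftrightarrow(3)$ by exploiting the $L^2$-orthonormality of the basis $\{f_{j,k}\}_{j=1}^{m_k}$ within each eigenspace $H_k$. The addition formula \eqref{eq:AdditionTheorem} is not required here; it would only collapse the inner sum in (3) in the purely zonal case $A_k=\{1,\dots,m_k\}$.

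For $(1)\Leftrightarrow(2)$ I would rerun the proof of \Cref{THMSerKerSPD} verbatim, with the index set $\N$ replaced by $\{(j,k):k\in\Z_+,\,1\le j\le m_k\}$. Continuity of $K$ lets one swap the finite sum over $\Xi$ with the series defining $K$, giving
$$\sum_{\xi,\zeta\in\Xi}c_\xi\overline{c_\zeta}K(\xi,\zeta)=\sum_{k,j}d_{j,k}\lv y_{j,k}\rv^2,\qquad y_{j,k}:=\sum_{\xi\in\Xi}c_\xi f_{j,k}(\xi).$$
Positive definiteness of $K$ forces $d_{j,k}\ge 0$ for every $(j,k)$ by the same argument used in \Cref{THMSerKerSPD}, so this quadratic form vanishes precisely when $y_{j,k}=0$ for all $(j,k)\in\mathcal{F}$, and strict positive definiteness is exactly the assertion that this vanishing forces $c\equiv 0$. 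This is (2).

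For $(3)\Rightarrow(2)$, substitute the hypothesis of (2) into the inner sum of (3): each factor $y_{j,k}$ with $(j,k)\in\mathcal{F}$ vanishes, so the hypothesis of (3) is met and hence $c\equiv 0$. For $(2)\Rightarrow(3)$, assume (2) and the vanishing in (3) for every $k\in\mathcal{N}$ and every $\zeta\in\M$. Fix $(j',k)\in\mathcal{F}$, multiply the displayed identity by $f_{j',k}(\zeta)$ and integrate against $d\mu(\zeta)$; orthonormality of the eigenspace basis leaves only the $j=j'$ contribution, producing $\sum_{\xi\in\Xi}c_\xi f_{j',k}(\xi)=0$ for every $(j',k)\in\mathcal{F}$, whence (2) yields $c\equiv 0$. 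I do not anticipate any genuine obstacle: the whole argument is bookkeeping around \Cref{THMSerKerSPD} together with one $L^2$-projection, with the only mild care being that the series-sum interchange in $(1)\Leftrightarrow(2)$ is justified as in \Cref{LE:CharPosDef}.
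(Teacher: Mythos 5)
Your proposal is correct and follows essentially the same route as the paper: $(1)\Leftrightarrow(2)$ by specialising the convolutional-kernel criterion \Cref{THMSerKerSPD} to the double-index representation, and $(2)\Leftrightarrow(3)$ by extracting the coefficients $\sum_{\xi\in\Xi}c_{\xi}f_{j,k}(\xi)$ from the display in (3). The only cosmetic difference is that you recover these coefficients by integrating against $f_{j',k}$ (orthonormality of the eigenspace basis) where the paper appeals to linear independence of the eigenfunctions; your observation that the addition formula is not actually needed for this step is also accurate.
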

\begin{proof}
The equivalence of one and two was already established in Theorem 6, which only needs to be adapted to the double index setting. 
Using the addition formula of the eigenfunction \eqref{eq:AdditionTheorem} we find,
since the eigenfunctions are linearly independent, the last is zero if and only if 
$$\sum_{\xi \in \Xi}c_{\xi}f_{j,k}(\xi)=0$$ for all $j=1,\ldots,m_k$, and all $k\in \mathcal{N}$. 
\end{proof}

We further define a set of $G$ invariant kernels. The kernels have already been studied in \citep{Levesley2007} and \citep{Odell2012} but with a derivation using an embedding and resulting in a slightly different representation.  
All kernels of the form 
\begin{equation}\label{eqKernHomRad}
K(\xi,\zeta)=\sum_{k=0}^{\infty}d_{k}f^k_{\xi}(\zeta),\quad  d_{k}\in\R,
\end{equation}
are invariant under $G$.
For these kernels we define the set $$\mathcal{G}:\lbrace k: d_k>0\rbrace$$
and find as a consequence of the last theorem.
\begin{corollary}
A continuous $G$-invariant positive definite kernel of the form \eqref{eqKernHomRad} is strictly positive definite if and only if 
for any finite set of distinct points $\Xi\subset\M$, $$ \sum_{\xi\in \Xi }c_{\xi} f_{\xi}^k(\zeta)=0,\ \forall k \in \mathcal{G},\zeta \in \M,\text{ implies }c_{\xi }=0,\ \forall \xi \in \Xi.$$
We say such a set induces strict positive definiteness for $G$-invariant kernels.
\end{corollary}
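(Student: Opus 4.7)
The plan is to reduce the corollary to Theorem~\ref{LEEquivspdGinv} (condition 3) via the addition formula \eqref{eq:AdditionTheorem}, which is precisely the bridge between the zonal representation \eqref{eqKernHomRad} and the orthonormal-basis representation \eqref{eqKernHomConv}.

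First I would rewrite $K$ in the form of \eqref{eqKernHomConv}. Applying \eqref{eq:AdditionTheorem}, for any orthonormal basis $f_{1,k},\ldots,f_{m_k,k}$ of $H_k$,
\begin{equation*}
f^k_{\xi}(\zeta)=m_k^{-1/2}\sum_{j=1}^{m_k}f_{j,k}(\xi)\overline{f_{j,k}(\zeta)},
\end{equation*}
so that
\begin{equation*}
K(\xi,\zeta)=\sum_{k=0}^{\infty}\sum_{j=1}^{m_k}\frac{d_k}{m_k^{1/2}}f_{j,k}(\xi)\overline{f_{j,k}(\zeta)}.
\end{equation*}
This is exactly of the form \eqref{eqKernHomConv} with coefficients $d_{j,k}=d_k/m_k^{1/2}$ that are independent of $j$. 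Note that $d_k>0$ if and only if $d_{j,k}>0$ for every $j=1,\dots,m_k$; hence with the notation of Theorem~\ref{LEEquivspdGinv} we have $A_k=\{1,\dots,m_k\}$ when $k\in\mathcal{G}$ and $A_k=\emptyset$ otherwise, and $\mathcal{N}=\mathcal{G}$.

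Next I would apply the equivalence (1)$\Leftrightarrow$(3) of Theorem~\ref{LEEquivspdGinv}: strict positive definiteness is equivalent to the implication that, for every finite set of distinct points $\Xi$,
\begin{equation*}
\sum_{\xi\in\Xi}c_\xi\sum_{j\in A_k}f_{j,k}(\xi)\overline{f_{j,k}(\zeta)}=0,\quad\forall k\in\mathcal{N},\ \zeta\in\M,
\end{equation*}
forces $c_\xi=0$ for all $\xi\in\Xi$. Substituting $\mathcal{N}=\mathcal{G}$ and $A_k=\{1,\dots,m_k\}$ and invoking the addition formula once more yields, for $k\in\mathcal{G}$,
\begin{equation*}
\sum_{\xi\in\Xi}c_\xi\sum_{j=1}^{m_k}f_{j,k}(\xi)\overline{f_{j,k}(\zeta)}=m_k^{1/2}\sum_{\xi\in\Xi}c_\xi f^k_{\xi}(\zeta).
\end{equation*}
Since $m_k^{1/2}\neq 0$, the vanishing of the left-hand side for every $\zeta\in\M$ is the same as $\sum_{\xi\in\Xi}c_\xi f^k_{\xi}(\zeta)=0$ for every $\zeta$. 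This matches the condition in the corollary verbatim, and both directions of the equivalence transfer.

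There is no real obstacle here; the only point that needs care is the bookkeeping showing that the positive-definiteness hypothesis together with the representation \eqref{eqKernHomRad} is compatible with the $j$-independent choice $d_{j,k}=d_k/m_k^{1/2}$, and that $\mathcal{G}$ as defined from $\{d_k\}$ coincides with the index set $\mathcal{N}$ arising from $\{d_{j,k}\}$. Once this identification is made, the corollary is a direct specialization of Theorem~\ref{LEEquivspdGinv}.
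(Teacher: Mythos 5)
Your proof is correct and follows exactly the route the paper intends: the corollary is obtained by rewriting \eqref{eqKernHomRad} in the form \eqref{eqKernHomConv} via the addition formula \eqref{eq:AdditionTheorem} (giving $d_{j,k}=d_k m_k^{-1/2}$, hence $A_k=\{1,\dots,m_k\}$ and $\mathcal{N}=\mathcal{G}$) and then invoking the equivalence (1)$\Leftrightarrow$(3) of \Cref{LEEquivspdGinv}. The paper states this only as an immediate consequence of that theorem, so your write-up simply supplies the bookkeeping explicitly.
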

From strict positive definiteness of $G$-invariant kernels, conditions for kernels of the form \eqref{eqKernHomConv} can be deduced. This is useful in applications because the class of $G$-invariant kernels is more intensely studied and in many settings simple characterizations of the sets $\mathcal{K}$ inducing strict positive definiteness exist. 
For a kernel of convolutional form we define two subset of $\N$:
\begin{equation}\label{defUL} \begin{aligned}
\mathcal{U}:=\left\lbrace k\in \N: \ \exists j\in \lbrace 1,\ldots,m_k\rbrace \text{ with }d_{j,k}\neq 0\right\rbrace,\\
\mathcal{L}:=\left\lbrace k\in \N: \ d_{j,k}> 0,\ \forall j\in \lbrace 1,\ldots,m_k\rbrace\right \rbrace.
\end{aligned}
\end{equation}
From the definition it is clear that for any kernel $\mathcal{L}\subseteq\mathcal{U}$ and equality holds for (but not only for) $G$-invariant kernels.
\begin{lemma}\label{LE:HomConNec}\begin{enumerate}
\item For  a continuous positive definite kernel given as \eqref{eqKernHomConv}  to be strictly positive definite it is necessary that $\mathcal{U}$ induces strict positive definiteness for $G$-invariant kernels on $\M$.
\item For  a continuous positive definite kernel given as \eqref{eqKernHomConv} to be strictly positive definite it is sufficient that $\mathcal{L}$ induces strict positive definiteness for $G$-invariant kernels on $\M$.
\end{enumerate}
\end{lemma}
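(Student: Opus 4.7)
Both parts of the lemma reduce to the characterisation in \Cref{LEEquivspdGinv}, by using the addition formula \eqref{eq:AdditionTheorem} as the bridge between the ``basis'' form of strict positive definiteness (condition~(2) of that theorem) and the ``zonal'' form used in the definition of $G$-invariant inducing sets. Since a positive definite kernel of the form \eqref{eqKernHomConv} has $d_{j,k}\geq 0$, one has $\mathcal{U}=\mathcal{N}$ and $\mathcal{L}\subseteq\{k:\{1,\dots,m_k\}\subseteq A_k\}$, so the whole argument is essentially bookkeeping about which index pairs lie in $\mathcal{F}$.

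For part~(1), I would argue by contrapositive. Suppose $\mathcal{U}$ does \emph{not} induce strict positive definiteness for $G$-invariant kernels; then there exist a finite set $\Xi\subset\M$ of distinct points and coefficients $c_\xi$, not all zero, with $\sum_{\xi\in\Xi}c_\xi f_\xi^k(\zeta)=0$ for every $k\in\mathcal{U}$ and every $\zeta\in\M$. The addition formula \eqref{eq:AdditionTheorem} rewrites this as $\sum_{\xi\in\Xi}c_\xi\sum_{j=1}^{m_k}f_{j,k}(\xi)\overline{f_{j,k}(\zeta)}=0$, and since $\{f_{j,k}\}_{j=1}^{m_k}$ is a linearly independent family in $L^2(\M)$ (being part of an orthonormal basis), varying $\zeta$ forces $\sum_{\xi\in\Xi}c_\xi f_{j,k}(\xi)=0$ for every $j=1,\dots,m_k$ and every $k\in\mathcal{U}$. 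Since any $(j,k)\in\mathcal{F}$ satisfies $k\in\mathcal{U}$, this identity holds in particular for all $(j,k)\in\mathcal{F}$, contradicting \Cref{LEEquivspdGinv}(2) for the assumed strictly positive definite $K$.

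For part~(2), I would verify condition~(2) of \Cref{LEEquivspdGinv} directly. Assume $\mathcal{L}$ induces strict positive definiteness for $G$-invariant kernels, and fix a distinct set $\Xi$ and coefficients $c_\xi$ with $\sum_{\xi\in\Xi}c_\xi f_{j,k}(\xi)=0$ for all $(j,k)\in\mathcal{F}$. For every $k\in\mathcal{L}$ the definition of $\mathcal{L}$ yields $(j,k)\in\mathcal{F}$ for \emph{all} $j=1,\dots,m_k$, so the vanishing hypothesis applies to the full basis of the $k$-th eigenspace. Multiplying these identities by $\overline{f_{j,k}(\zeta)}/m_k^{1/2}$, summing over $j$, and invoking \eqref{eq:AdditionTheorem} then gives $\sum_{\xi\in\Xi}c_\xi f_\xi^k(\zeta)=0$ for every $\zeta\in\M$ and every $k\in\mathcal{L}$. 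The hypothesis that $\mathcal{L}$ induces strict positive definiteness for $G$-invariant kernels now forces $c_\xi=0$ for all $\xi\in\Xi$, and \Cref{LEEquivspdGinv} delivers the strict positive definiteness of $K$.

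I do not anticipate a real obstacle. The only subtlety worth flagging is the asymmetry between $\mathcal{U}$ and $\mathcal{L}$: in part~(1) the linear-independence step promotes vanishing on $\mathcal{F}$ to vanishing on all $(j,k)$ with $k\in\mathcal{U}$, whereas in part~(2) the addition formula only allows us to reconstruct a zonal identity when \emph{every} $j$ contributes, which is exactly what $k\in\mathcal{L}$ guarantees. This is precisely why necessity uses the larger set $\mathcal{U}$ and sufficiency uses the smaller set $\mathcal{L}$, and why equality of the two (hence a full equivalence) occurs in the $G$-invariant case.
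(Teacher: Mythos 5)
Your proposal is correct and follows essentially the same route as the paper: part (1) is the same contrapositive/contradiction argument via the addition formula and linear independence of the $f_{j,k}$, and part (2) is the same reduction to the equivalence in \Cref{LEEquivspdGinv} (which the paper states tersely and you simply spell out). No gaps.
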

\begin{proof}
We prove (1) by contradiction. Assume $\mathcal{U}$ does not induce strict positive definiteness for homogeneous kernels.   Then there exists a set of data sites $\xi \in \Xi$ and coefficients not all zero satisfying:
$$\sum_{\xi\in\Xi}c_{\xi}\sum_{j=1}^{m_k} f_{j,k}(\xi)\overline{f}_{j,k}(\zeta)\equiv 0,\quad  \forall k\in \mathcal{U}, \zeta \in \M. $$
Exchanging the two finite sums and applying the linear independence of the function $f_{k,j}(\zeta)$ the latter implies 
$$\sum_{\xi\in\Xi}c_{\xi} f_{j,k}(\xi)=0,\quad \forall  k\in \mathcal{U},\ j=1,\ldots, m_k.$$
The last statement contradicts the strict positive definiteness of $K$ as characterised in \eqref{THMSerKerSPD}.
The second statement follows directly form the equivalence of (1) and (2) in \Cref{LEEquivspdGinv}.
\end{proof}



For the case of the $d-$sphere the properties implied by this lemma were already described in \citep{Jaeger2021} and for the case of two-point homogeneous manifolds results are described in  \citep{Guella2022}. The later also presents an approach of constructing non-isotropic kernels having significantly less positive coefficients as the isotropic kernels. As an additional example we study products of two point homogeneous manifolds.

\subsection{Example: Products of two-point homogeneous manifolds}
The next section investigates strict positive definiteness on manifold of the product structure $\M \times \mathbb H$, where both $\M$ and $\mathbb H$ are two-point homogeneous manifolds.
Therefore they are isomorphic to one of the following five cases as proven in \citep{Wang1952},
\begin{align*}
\mathbb{S}^{d-1},\quad P^{d-1}(\R),\quad
P^{d-1}(\C),\quad P^{d-1}(H),\quad
P^{16}(Cay).
\end{align*}

With the notation of the previous section and $d(\cdot,\cdot)$ the distance between its to arguments the addition formula from  \citep{Evarist1975} reads
\begin{equation}\label{eqSummationFor}
\sum_{j=1}^{m_k}f_{k,j}(\xi)\overline{f_{k,j}(\zeta)}=c_{k}P_{\epsilon k}^{(\alpha,\beta)}\left(\cos\left(\epsilon^{-1}d(\xi,\zeta)\right)\right),\quad \xi,\zeta \in \M,
\end{equation}
where
\begin{equation*} c_k=\frac{\Gamma(\beta+1)(2k+\alpha+\beta+1)\Gamma(k+\alpha+\beta+1)}{\Gamma(\alpha+\beta+2)\Gamma(k+\beta+1)}
\end{equation*}
and throughout $P_k^{(\alpha,\beta)}$, denotes the Jacobi polynomials normalized by
\begin{equation}
P_k^{(\alpha,\beta)}(1)=\frac{\Gamma(k+\alpha+1)}{\Gamma(k+1)\Gamma(\alpha+1)}.
\end{equation}
The coefficients satisfy $\alpha=\frac{d-3}{2}$, $\beta$ takes one of the values $(d-3)/2,\,-1/2,\,0,\,1,\,3$, depending on  which of the five cases of manifolds is studied, and $\epsilon=1$ unless $\M=P^{d-1}(\R)$ in which case $\epsilon=2$.

To be specific a kernels of the from \eqref{eq:ConkernProdform} in this setting has a representation of the form
\begin{equation}\label{eqKernTwopointProdConv}
K((\xi,\zeta),(\xi',\zeta'))=\sum_{k,k'=0}^{\infty}\sum_{j=1}^{m_k}\sum_{j'=1}^{m_{k'}}a_{j,j',k,k'}f_{j,k}(\xi)g_{j',k'}(\zeta)\overline{f_{j,k}(\zeta')}\overline{g_{j',k'}(\xi')}, 
\end{equation}with $a_{j,j',k,k'}\geq 0$ for a positive definite kernel.
We will now derive sufficient conditions for the strict positive definiteness of the kernel by employing \Cref{THMSerKerSPD} and the results existing for isotropic kernels.

For a kernel of the \eqref{eqKernTwopointProdConv} define the two subset of $\N^2$:
\begin{align*}
\mathcal{J}:=\left\lbrace (k,k')\in \N^2:\ \exists (j,j') \text{ with } a_{j,j',k,k'}> 0\right\rbrace,\\
\mathcal{F}:=\left\lbrace (k,k')\in \N^2:\ a_{k,k',j,j'}> 0\ \forall (j,j') \right \rbrace.
\end{align*}


For the case of products of spheres we additionally define the following subset of $\mathcal{J}$ and $\mathcal{F}$ respectively:
\begin{align*} \mathcal{J}^{0,0}&:=\mathcal{J} \cap \lbrace 2\N\times 2\N\rbrace\\
\mathcal{J}^{0,1}&:=\mathcal{J} \cap \lbrace 2\N\times (2\N+1)\rbrace \\
\mathcal{J}^{1,0}&:=\mathcal{J} \cap \lbrace (2\N+1)\times 2\N \rbrace \\
\mathcal{J}^{1,1}&:=\mathcal{J} \cap \lbrace (2\N+1)\times (2\N+1)\rbrace.
\end{align*} 
and for the circle: 
$\mathcal{J}_k:=\lbrace k' :\ (k,k') \in \mathcal{J}\rbrace$. 
\begin{corollary}
For a continuous positive definite kernel of the form \eqref{eqKernTwopointProdConv} on $\M\times\mathbb H$ we summarise the explicit conditions depending on the choices of $\M$ and $\mathbb H$:

\begin{enumerate}
\item Let  $\M=  \mathbb S^{d'-1}$ with $d'>2$, $\mathbb H\neq \mathbb S^{d-1}$ be a two-point homogeneous manifolds. For $K$ to be strictly positive definite it is necessary (sufficient) that $\mathcal{J}$ ($\mathcal{F}$) includes sequences $(k_r,k'_r)_{r=1}^{\infty}$ and $(j_r,j'_r)_{r=1}^{\infty}$ with $\lbrace k_r\rbrace \in 2\N$ and $\lbrace j_r\rbrace \in 2\N+1$ and $\underset{r\rightarrow \infty}{\lim}k_r=\underset{r\rightarrow \infty}{\lim}j_r=\underset{r\rightarrow \infty}{\lim}k'_r=\underset{r\rightarrow \infty}{\lim}j'_r=\infty$.
\item Let $\M, \mathbb H\neq \mathbb S^{d-1}$ be two-point homogeneous manifolds. For $K$ to be strictly positive definite it is necessary (sufficient) that $\mathcal{J}$ ($\mathcal{F}$) includes a sequences $(k_r,k'_r)_{r=1}^{\infty}$ with  $\underset{r\rightarrow \infty}{\lim}k_r=\underset{r\rightarrow \infty}{\lim}k'_r=\infty$.
\item Let $\M=\mathbb S^{d-1}$, $\mathbb H=\mathbb  S^{d'-1}$ with $d,d'>2$. For $K$ to be strictly positive definite it is necessary (sufficient) that $\mathcal{J}^{i,j}$ ($\mathcal{F}^{i,j}$)  includes  a sequences $({k_r}^{i,j},{k'_r}^{i,j})_{r=1}^{\infty}$ with  $\underset{r\rightarrow \infty}{\lim}k_r^{i,j}=\underset{r\rightarrow \infty}{\lim}{k'_r}^{i,j}=\infty$ for each pair $(i,j)\in \lbrace0,1\rbrace^2$.
\item Let $\M= \mathbb S^{d-1}$, $\mathbb H=\mathbb  S^{1}$. For $K$ to be strictly positive definite it is necessary (sufficient)  that  for each $\gamma \geq0$, the sets 
\begin{align*}
\lbrace k\in \Z: \mathcal{J}_{\vert k\vert} \cap\N_{\geq \gamma} \cap (2\Z+1) \neq \emptyset\rbrace,\\ 
\lbrace k\in \Z: \mathcal{J}_{\vert k\vert} \cap\N_{\geq \gamma} \cap (2\Z) \neq \emptyset\rbrace
\end{align*}
(or for the sufficient case
\begin{align*}
\lbrace k\in \Z:  \mathcal{F}_{\vert k\vert}  \cap \N_{\geq \gamma} \cap (2\Z+1) \neq \emptyset\rbrace,\\ 
\lbrace k\in \Z: \mathcal{F}_{\vert k\vert} \cap \N_{\geq \gamma} \cap (2\Z) \neq \emptyset\rbrace )
\end{align*}
intersect every full arithmetic progression.
\item Let $\M\neq\mathbb S^{d-1}$ be a two-point homogeneous manifold, $\mathbb H=\mathbb  S^{1}$. For $K$ to be strictly positive definite it is necessary (sufficient) that for each $\gamma>0$ the set  
$$\lbrace k\in \Z:\ \mathcal{J}_{\vert k\vert}   \cap \N_{\geq \gamma}\neq \emptyset\rbrace, $$
(or for the sufficient case $$\lbrace k\in \Z:\ \mathcal{F}_{\vert k\vert} \cap \N_{\geq \gamma}\neq \emptyset\rbrace, )$$
intersects every full arithmetic progression.
\item Let $\mathbb H=\M=\mathbb S^{1}$. For $K$ to be strictly positive definite it is necessary (sufficient) that the set 
$$\lbrace( k,l) : (\vert k\vert , \vert l\vert ) \in \mathcal{J} \rbrace$$  
(or for the sufficient case 
$$\lbrace( k,l) : (\vert k\vert , \vert l\vert ) \in \mathcal{F} \rbrace)$$
intersects all the translations of each subgroup
of $\Z^2$ having the form $(a, b)\Z + (0, d)\Z, a, d > 0.$
\end{enumerate}
\end{corollary}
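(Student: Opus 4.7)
The plan is to combine the product-form characterization for convolutional kernels (the product analogue of Theorem \ref{THMSerKerSPD}) with the reduction from multi-index convolutional kernels to $G$-invariant kernels given by Lemma \ref{LE:HomConNec}, and then to invoke the classical characterizations of strict positive definiteness of isotropic kernels on each of the five two-point homogeneous manifolds that appear in the list of cases.

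First I would observe that \eqref{eqKernTwopointProdConv} is of product-convolutional form in the combined indices $(j,k)$ on $\M$ and $(j',k')$ on $\mathbb H$. The product-manifold version of the convolutional lemma then yields that $K$ is strictly positive definite on $\M\times\mathbb H$ as soon as the set of $(j,k)$ for which the corresponding slice of positive coefficients induces strict positive definiteness on $\mathbb H$ itself induces strict positive definiteness on $\M$; conversely, the analogous \emph{some-coefficient-positive} set is necessary. The sets $\mathcal{F}$ and $\mathcal{J}$ defined before the corollary are exactly the double-index analogues of the sets $\mathcal{L}$ and $\mathcal{U}$ from Lemma \ref{LE:HomConNec}, which govern the sufficient and the necessary directions, respectively.

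Having reduced the question to strict positive definiteness of isotropic kernels on each factor, I would apply the classical characterizations cited in the paper: for the non-sphere two-point homogeneous manifolds it suffices that the set of indices with positive Jacobi coefficients be unbounded; for $\mathbb{S}^{d-1}$ with $d>2$ the set must contain infinitely many even and infinitely many odd indices; for $\mathbb{S}^1$ the set must intersect every full arithmetic progression of $\Z$; and for $\mathbb{S}^1\times\mathbb{S}^1$ it must intersect every translation of each subgroup $(a,b)\Z+(0,d)\Z$ with $a,d>0$. Plugging each characterization into the two-step reduction above produces the six sub-cases listed. The main obstacle is the book-keeping in the sphere cases: in case 3 the parity conditions in each factor combine to require the four separate subsets $\mathcal{J}^{i,j}$; in cases 4 and 5 the arithmetic-progression condition coming from the circle factor must be threaded through the first product reduction without destroying the parity or mere-unboundedness condition coming from the other factor; and case 6 requires the genuinely two-dimensional lattice condition, which does not arise from a purely factorwise product reduction but must be imported directly from the known intrinsic characterization on $\mathbb{S}^1\times\mathbb{S}^1$.
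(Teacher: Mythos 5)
Your overall architecture differs from the paper's, and the difference hides a genuine gap. The paper does not perform a factorwise reduction at all: it regards $\M\times\mathbb H$ as a single homogeneous manifold, applies \Cref{LE:HomConNec} in product notation to conclude that strict positive definiteness requires (resp.\ is implied by) $\mathcal{J}$ (resp.\ $\mathcal{F}$) inducing strict positive definiteness for \emph{isotropic kernels on the product} $\M\times\mathbb H$, and then quotes the intrinsic product characterizations of Barbosa--Menegatto and Guella--Menegatto for each of the six cases. Your route instead passes through the factorwise product lemma (the product analogue of \Cref{THMSerKerSPD}), i.e.\ the criterion that the set of $k$ whose slice $\mathcal{F}_k$ induces strict positive definiteness on $\mathbb H$ must itself induce strict positive definiteness on $\M$.

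That factorwise criterion is strictly stronger than the conditions asserted in the corollary, so it cannot prove them. Take case 2 and $\mathcal{F}=\lbrace (r,r): r\in\N\rbrace$: this set contains a sequence $(k_r,k_r')$ with both coordinates tending to infinity, so the corollary declares the kernel strictly positive definite; but every slice $\mathcal{F}_k=\lbrace k\rbrace$ is a single index, hence no slice induces strict positive definiteness on $\mathbb H$, and your factorwise criterion gives nothing. The same mismatch occurs in every case, not only case 6 as you suggest: all six conditions are genuinely two-dimensional (diagonal sequences, parity classes of pairs, lattice translates) and are not obtained by iterating the one-dimensional characterizations. In addition, the factorwise product lemma in the paper is only a sufficiency statement, so it cannot deliver the necessary directions at all; those come from part (1) of \Cref{LE:HomConNec} applied to the product manifold together with the necessity halves of the cited product theorems. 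To repair the argument, replace the factorwise reduction by the reduction to isotropic kernels on $\M\times\mathbb H$ and import the cited product characterizations wholesale.
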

\begin{proof}
From \Cref{LE:HomConNec} adapted to the product notation it follows that:
\begin{enumerate}
\item[a)] For  a continuous kernel of the form \eqref{eqKernTwopointProdConv} to be strictly positive definite it is necessary  that $\mathcal{J}$ induces strict positive definiteness for isotropic kernels on $\M \times \mathbb H$.
\item[b)] For  a continuous kernel given as \cref{eqKernTwopointProdConv} to be strictly positive definite it is sufficient that $\mathcal{F}$ induces strict positive definiteness for isotropc kernels on $\M\times \mathbb H$ and $a_{j,j',k,k'}\geq0$.
\end{enumerate}
The results now follow from the isotropic cases which were proven for $1.$  in Theorem 4.5 \citep{Barbosa2017}, $2.$ in Theorem 4.3  \citep{Barbosa2017}, $3.$  in Theorem 1.2  \citep{Guella2016a}, $4.$  in Theorem 3.11  \citep{Guella2016b}, $5.$  in Theorem 4.2 \citep{Guella2016b}, $6.$ in Theorem 1.2  \citep{Guella2017}.
\end{proof}


\subsection*{Acknowledgement}
Jean Carlo Guella was funded by grant 2021/04226-0, S\~ao Paulo Research Foundation (FAPESP) 2021/04226-0. 
The work of J. J\"ager was supported by the DFG as part of the research project JA 3033/2-1 project number 461449252.

\def\ln{\log}

\bibliographystyle{amsplain}
\bibliography{C:/Users/Janin/JLUbox/Dokumente/LiteraturALL.bib}

\end{document}